\documentclass[12pt]{amsart}
\usepackage{amssymb,amscd,amsmath,amsthm,url}
\usepackage[all]{xy}
\usepackage{fullpage}
\usepackage{enumitem}
\usepackage{tikz-cd}
\usepackage{appendix}
\usepackage{xcolor}

\usepackage[colorlinks=true,
            citecolor=blue,
            linkcolor=red,
            urlcolor=blue]{hyperref}
\hypersetup{linktoc=all}

\newtheorem{Theorem}{Theorem}[section]
\newtheorem{Proposition}{Proposition}[section]
\theoremstyle{remark}
\newtheorem{Lemma}{Lemma}[section]

\newtheorem{Definition}{Definition}[section]

\numberwithin{equation}{section}

\author{Ahmadreza Khazaeipoul}
\address{Department of Mathematics, University of Toronto, 40 St. George Street, Toronto, ON, Canada M5S 2E4}
\email{Ahmadreza.Khazaeipoul@mail.utoronto.ca}

\newcommand{\Lhol}{L_{\mathrm{hol}}}

\title{An Elliptic Path Fibration and Holomorphic Morita Equivalence}

\begin{document}
\begin{abstract}
We construct an elliptic analogue of the Alekseev-Meinrenken path
fibration \cite{alekseev2009atiyah} for a simply connected complex Lie
group with a nondegenerate invariant symmetric bilinear form. A natural $2$-form on the correspondence
space induces a holomorphic symplectic groupoid over the space
of $(0,1)$-connections and a holomorphic quasi-symplectic groupoid over
the holomorphic loop group. The correspondence space defines a principal
holomorphic Morita bibundle with compatible differential forms.
\end{abstract}
\maketitle
\tableofcontents

\section{Introduction}
The path fibration of Alekseev and Meinrenken \cite{alekseev2009atiyah}
relates Hamiltonian gauge theory to quasi-Hamiltonian geometry. Its
correspondence space carries a $2$-form compatible with the actions of
the symplectic groupoid of connections on the circle and the
quasi-symplectic groupoid of conjugation on a compact Lie group.

We construct an elliptic analogue, motivated by the correspondence
between coadjoint orbits in the space of $(0,1)$-connections and
twisted-conjugacy classes in the holomorphic loop group described in
\cite[Proposition~5.13]{khesin2009geometry}. Let
\[
\Sigma=\mathbb{C}/(\mathbb{Z}+\tau\mathbb{Z})
\]
be an elliptic curve, where $\operatorname{Im}(\tau)>0$, and let $G$ be a
simply connected complex Lie group whose Lie algebra carries a nondegenerate
invariant symmetric complex-bilinear form. We introduce a correspondence
space of smooth maps on the cylinder $\mathbb C/\mathbb Z$ whose
quasi-periodicity is encoded by holomorphic loops. The space carries
commuting actions of the gauge group
\[
G_\Sigma=C^\infty(\Sigma,G)
\]
and the holomorphic loop group $\Lhol G$.

A $2$-form on this correspondence space induces a holomorphic symplectic groupoid over
\[
\mathcal A=\Omega^{0,1}(\Sigma,\mathfrak g)
\]
of $(0,1)$-connections on the trivial principal $G$-bundle. We also equip
the twisted-conjugation action groupoid
\[
\Lhol G\ltimes\Lhol G\rightrightarrows\Lhol G
\]
with a holomorphic quasi-symplectic structure.

The main result is the following.
\begin{Theorem}[Main Theorem]\label{thm:main}
Let
\[
(\mathcal G_1,\omega_1)
=\bigl(G_\Sigma\ltimes\mathcal A,\omega_1\bigr)
\rightrightarrows\mathcal A,
\qquad \mathcal A=\Omega^{0,1}(\Sigma,\mathfrak g),
\]
be the holomorphic symplectic groupoid associated with the gauge
action of
\[
G_\Sigma=C^\infty(\Sigma,G)
\]
on $\mathcal A$. Let
\[
(\mathcal G_2,\omega_2)
=\bigl(\Lhol G\ltimes\Lhol G,\omega_2\bigr)
\rightrightarrows\Lhol G
\]
be the holomorphic quasi-symplectic groupoid associated with twisted
conjugation.
Then $(\mathcal P,\varpi)$ is a principal holomorphic Morita bibundle
between these groupoids, with compatible $2$-forms and background
$3$-forms.
\end{Theorem}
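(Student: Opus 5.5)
The argument has three parts. First I fix the objects, then I check the principal bibundle axioms, and finally I check the compatibility of forms, following the Alekseev--Meinrenken template.

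\textbf{Setup.} Take $\mathcal P$ to be the space of smooth $g\colon\mathbb C\to G$ (equivalently, maps on the cylinder $\mathbb C/\mathbb Z$) that satisfy
\[
g(z+\tau)=g(z)\,\gamma(z)
\]
for some $\gamma\in\Lhol G$. The two moment maps are
\[
\Phi_1(g)=g^{-1}\bar\partial g\in\mathcal A,\qquad \Phi_2(g)=\gamma .
\]
Note that $g^{-1}\bar\partial g$ descends to $\Sigma$ precisely because $\gamma$ is holomorphic. The action of $G_\Sigma$ is by left multiplication and covers the gauge action on $\mathcal A$. The action of $\Lhol G$ is by $g\mapsto g h^{-1}$, which covers the twisted conjugation $\gamma\mapsto h(z+\tau)\,\gamma\,h(z)^{-1}$ (up to the paper's conventions).

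\textbf{Bibundle axioms.} I first check that the two actions commute and that each moment map is invariant under the other group.

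Next I show that $\Phi_2$ is a principal $G_\Sigma$-bundle. For freeness: if $kg=g$ then $k=1$. For transitivity on fibres: two solutions with the same $\gamma$ differ by $g'g^{-1}$, which is $\tau$-periodic and hence lies in $G_\Sigma$. For surjectivity: every $\gamma$ arises. Here I use simple connectivity of $G$ to build a smooth $g$ on the strip $0\le\operatorname{Im}z\le\operatorname{Im}\tau$ that interpolates between $1$ and $\gamma$ with the required gluing.

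Then I show that $\Phi_1$ is a principal $\Lhol G$-bundle. Surjectivity means solving $g^{-1}\bar\partial g=A$ on the universal cover $\mathbb C/\mathbb Z$. I would obtain this by trivializing the pullback of the holomorphic structure $\bar\partial+A$ on the noncompact Stein cylinder, which is possible since $G$ is simply connected and holomorphic bundles on Stein curves are trivial (Grauert/Oka). The quasi-periodicity of $A$ then forces $g(z+\tau)^{-1}g(z)$ to be holomorphic, which gives $\gamma$. Two solutions differ by a holomorphic loop on the right, so the $\Lhol G$-action is free and transitive on fibres.

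Local triviality, in the Fréchet or holomorphic sense, would follow from smooth dependence of these solutions on parameters.

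\textbf{Compatibility of forms.} Define $\varpi$ as the pullback under the natural maps of a Wess--Zumino/Cartan-type expression, for example
\[
\varpi=\tfrac12\int_{\mathrm{strip}}\langle g^{-1}dg\wedge\bar\partial(g^{-1}dg)\rangle+\text{boundary term in }\gamma .
\]
I need three identities:
\[
d\varpi=-\Phi_2^*\eta
\]
(with $\Phi_1^*$ of the zero background form on $\mathcal A$), together with the multiplicativity relations
\[
a^*\varpi=\mathrm{pr}_{\mathcal G_1}^*\omega_1+\mathrm{pr}_{\mathcal P}^*\varpi
\]
for both actions. In these relations $\eta$ is the background $3$-form on $\Lhol G$ and $\omega_1,\omega_2$ are the groupoid forms defined earlier in the paper. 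Each identity reduces to a Stokes computation on the fundamental strip: the boundary contributions at $\operatorname{Im}z=0$ and $\operatorname{Im}z=\operatorname{Im}\tau$ are identified through $g(z+\tau)=g\gamma$, and they produce exactly the $\gamma$-dependent terms of $\omega_2$ and $\eta$.

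\textbf{Main obstacle.} I expect two places to be hardest. One is the nondegeneracy condition: $\ker\varpi$ must be exactly spanned by the generating vector fields of the two actions, meeting with trivial intersection, in the infinite-dimensional holomorphic sense. The other is surjectivity of $\Phi_1$ with smooth local sections. For the kernel I would compute $\varpi$ on $\xi=g^{-1}\delta g$, reduce $\iota_\xi\varpi=0$ to $\bar\partial\xi=0$ together with a boundary condition, and deduce that $\xi$ is generated by the actions. For the sections I would rely on the $\bar\partial$-solvability above.
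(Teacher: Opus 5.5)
Your architecture is the paper's with left and right interchanged. The principal structure along $\Phi_2$ comes from simple connectivity and interpolation on the strip, the one along $\Phi_1$ from Oka--Grauert on the Stein cylinder, and the form identities from Stokes on $\Sigma_1$.

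However, the setup as written does not define a bibundle. With $g(z+\tau)=g(z)\gamma(z)$ and $A=g^{-1}\bar\partial g$:
\begin{itemize}
\item $A(z+\tau)=\operatorname{Ad}_{\gamma(z)^{-1}}A(z)$, so $A$ does not descend to $\Sigma$;
\item under $g\mapsto gh^{-1}$ it becomes $\operatorname{Ad}_hA$, so it is not $\Lhol G$-invariant;
\item under $g\mapsto kg$ it becomes $A+\operatorname{Ad}_{g^{-1}}(k^{-1}\bar\partial k)$, which is not a gauge transform of $A$.
\end{itemize}
The same slip runs through your surjectivity argument. If $g^{-1}\bar\partial g=A$ with $A$ periodic, then $g(z+\tau)g(z)^{-1}$ (not $g(z+\tau)^{-1}g(z)$) is holomorphic, and two solutions differ by a holomorphic loop on the \emph{left}. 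With your conventions the left leg must be $\Phi_1(g)=\bar\partial g\,g^{-1}$. This is also the moment map your bulk term actually produces, since $d(\bar\partial g\,g^{-1})=\operatorname{Ad}_g\bar\partial(g^{-1}dg)$. Alternatively, adopt the paper's choices: $\gamma(z+\tau)=h(z)\gamma(z)$, $p(\gamma)=\gamma^{-1}\bar\partial\gamma$, $G_\Sigma$ acting on the right, $\Lhol G$ on the left, and a bulk term built from $\theta^R$.

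The more serious gap is the nondegeneracy step, which you single out as the main obstacle but formulate incorrectly. The required condition is $\ker\varpi\cap\ker T\Phi_1\cap\ker T\Phi_2=\{0\}$. Your target fails on both counts:
\begin{itemize}
\item $\ker\varpi$ is not spanned by generating vectors. In the paper's conventions $\varpi(\xi_{\mathcal P},\xi'_{\mathcal P})=-\langle D^{0,1}_A\xi',\xi\rangle_\Sigma$, which is generally nonzero.
\item The two orbit directions do not meet trivially. At $\gamma=1$, every constant $X\in\mathfrak g$ gives a nonzero $X_{\mathcal P}\in\ker Tp\cap\ker Tq$.
\end{itemize}
Moreover, $\varpi$ really is degenerate: $X_{\mathcal P}\in\ker\varpi$ whenever $X+\operatorname{Ad}_{h^{-1}}\kappa(X)=0$, where $h=q(\gamma)$. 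So concluding that a kernel vector ``is generated by the actions'' finishes nothing.

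The missing ingredient is the gauge moment-map identity $\iota_{\xi_{\mathcal P}}\varpi=-p^*\langle d\mu,\xi\rangle$. It requires the right boundary term, in the paper $\tfrac12\int_{\mathbb R/\mathbb Z}\langle\operatorname{ev}_{z+\tau}^*\theta^L,\operatorname{ev}_z^*\theta^L\rangle\,dz$. Its contraction with $\xi_{\mathcal P}$ cancels the Stokes boundary terms of the bulk part pairwise, because $\xi$ is $\tau$-periodic. The same term supplies the Polyakov--Wiegmann correction that turns $d\varpi_{\mathrm{bulk}}=\int(\operatorname{ev}_{z+\tau}^*\eta_G-\operatorname{ev}_z^*\eta_G)\,dz$ into $q^*\eta$. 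You never pin this term down.

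Given the identity, any $V\in\ker\varpi\cap\ker Tq$ equals some $\xi_{\mathcal P}$, so $\langle Tp(W),\xi\rangle_\Sigma=0$ for all $W$. Surjectivity of $Tp$ and nondegeneracy of $\langle\cdot,\cdot\rangle_\Sigma$ then force $\xi=0$. This is Lemma~\ref{lem:kernel}, and it gives the stronger statement $\ker\varpi\cap\ker Tq=\{0\}$.

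Your direct computation can also be completed, but only by going one step further. After $\bar\partial$-closedness gives $V=X_{\mathcal P}$ with $X\in\Lhol\mathfrak g$, pairing with all $W$ and using nondegeneracy of $B$ gives $(I+T_h)X=0$, where $T_hX=\operatorname{Ad}_{h^{-1}}\kappa(X)$. Meanwhile $V\in\ker Tq$ means $T_hX=X$, and together these give $X=0$.
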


Section~2 constructs the correspondence space and its $2$-form.
Section~3 establishes the groupoid structures, Section~4 proves the
action identities, and Section~5 proves the Morita equivalence. The
appendix gives the finite-dimensional identities used in
Lemma~\ref{lem:G2-quasi-symplectic}.

\subsection*{Acknowledgements}
I thank my advisor, Eckhard Meinrenken, for introducing me to this
problem, his support, and many insightful discussions.
\section{The Elliptic Path Fibration}
\subsection{Conventions and standing assumptions}\label{sec:conventions}
Throughout, $G$ denotes a simply connected complex Lie group with Lie
algebra $\mathfrak g$, and $\langle\cdot,\cdot\rangle$ denotes a fixed
nondegenerate symmetric $\operatorname{Ad}$-invariant $\mathbb C$-bilinear
form on $\mathfrak g$.

Pairings of Lie-algebra-valued forms include exterior multiplication.
In particular, the pairing of two $1$-forms is skew-symmetric. The
differential $d$ acts on the relevant parameter space, while
$\partial_{\bar z}$ differentiates in the cylinder variable.
We use the complex orientation on $\Sigma$ and its fundamental annulus.
With both boundary circles parametrized by increasing real coordinate,
\begin{equation}\label{eq:boundary-orientation}
\partial\Sigma_1=(\mathbb R/\mathbb Z)-(\mathbb R/\mathbb Z+\tau).
\end{equation}
Thus Stokes' theorem takes the form
\begin{equation}\label{eq:stokes-convention}
\int_{\Sigma_1}\frac{\partial f}{\partial\bar z}\,dz\wedge d\bar z
=\int_{\mathbb R/\mathbb Z}\bigl(f(z+\tau)-f(z)\bigr)\,dz.
\end{equation}
Fundamental vectors are defined using positive exponentials. For the
actions introduced below, this means
\[
X_{\mathcal P}(\gamma)=X\gamma,
\qquad \xi_{\mathcal P}(\gamma)=\gamma\xi.
\]

For a topological Lie algebra $\mathfrak h$, we write $\mathfrak h^*$
for its continuous dual. On $\Lhol\mathfrak g$ we use the contour pairing
\[
B(X,Y):=\int_{\mathbb R/\mathbb Z}\langle X(z),Y(z)\rangle\,dz.
\]
This pairing is nondegenerate. Contour deformation gives
\begin{equation}\label{eq:translation-pairing}
B(\kappa X,\kappa Y)=B(X,Y).
\end{equation}
The image of $B$ in the continuous dual is called the regular dual.

To embed $(0,1)$-forms in $\mathfrak g_\Sigma^*$, for $A=A_{\bar z}\,d\bar z\in\mathcal A$ and
$\xi\in\mathfrak g_\Sigma$, set
\[
\langle A,\xi\rangle_\Sigma
:=\int_\Sigma\langle A_{\bar z},\xi\rangle\,dz\wedge d\bar z.
\]
Thus $\mu:\mathcal A\to\mathfrak g_\Sigma^*$ is defined by
\[
\mu(A)(\xi)=\langle A,\xi\rangle_\Sigma,
\]
and $d\mu$ is the corresponding linear embedding. Integrals over
$\Sigma$ are computed on a fundamental domain.
Symplectic nondegeneracy is understood in the weak sense: contraction
with the $2$-form is injective. We use the relative-closedness convention
$d\omega=s^*H-t^*H$ and the IM convention
\[
\sigma(a)(v)=\omega(a,T\epsilon(v)),
\qquad a\in\ker(Ts)|_M,\quad v\in TM,
\]
where $\epsilon$ denotes the unit map.

Our bibundle convention is $d\beta=q^*H_2-p^*H_1$, with plus signs in
both action identities (Proposition~\ref{prop:action-identities}).

\subsection{The elliptic curve}
Fix $\tau\in\mathbb C$ with $\operatorname{Im}(\tau)>0$, and let
\[
\Sigma:=\mathbb C/(\mathbb Z+\tau\mathbb Z)
\]
be the corresponding elliptic curve.
Let $\Sigma_1\subset\mathbb C/\mathbb Z$ denote the image of the strip
\[
0\leq\operatorname{Im}(z)\leq\operatorname{Im}(\tau).
\]
Then $\Sigma$ is obtained from $\Sigma_1$ by identifying the boundary
circles
\[
\mathbb R/\mathbb Z\quad\text{and}\quad\mathbb R/\mathbb Z+\tau
\]
by translation by $\tau$.

\subsection{Gauge group and the holomorphic loop group}
Let
\[
G_\Sigma:=C^\infty(\Sigma,G)
\]
denote the gauge group of the trivial principal $G$-bundle over $\Sigma$,
with Lie algebra $\mathfrak g_\Sigma=C^\infty(\Sigma,\mathfrak g)$, and let
\[
\Lhol G:=\{\psi:\mathbb C/\mathbb Z\to G\mid\psi\text{ is holomorphic}\}
\]
be the holomorphic loop group, with Lie algebra
\[
\Lhol\mathfrak g
=\{X:\mathbb C/\mathbb Z\to\mathfrak g\mid X\text{ is holomorphic}\}.
\]
Via the map
\[
\mathbb C/\mathbb Z\longrightarrow\mathbb C^*,
\qquad z\longmapsto w=\exp(2\pi iz),
\]
we identify $\mathbb C/\mathbb Z$ with $\mathbb C^*$. Consequently,
$\Lhol G$ may be identified with the space of holomorphic maps
$\mathbb C^*\to G$.

Let
\[
\kappa:\Lhol G\longrightarrow\Lhol G,
\qquad (\kappa(\psi))(z)=\psi(z+\tau),
\]
denote translation by $\tau$. The \emph{twisted conjugation} action is
\[
\psi\cdot g=\kappa(\psi)g\psi^{-1}.
\]

\subsection{The correspondence space}
\begin{Definition}[Correspondence space]\label{def:P}
Define
\[
\mathcal P
=\Bigl\{\gamma\in C^\infty(\mathbb C/\mathbb Z,G)\ \Bigm|\
z\mapsto\gamma(z+\tau)\gamma(z)^{-1}\text{ is holomorphic}\Bigr\}.
\]
\end{Definition}
The space $\mathcal P$ carries commuting left and right group actions.
The right action of $G_\Sigma$ is
\[
(\gamma\cdot u)(z)=\gamma(z)u(z),
\]
while the left action of $\Lhol G$ is
\[
(\psi\cdot\gamma)(z)=\psi(z)\gamma(z).
\]
\begin{Definition}\label{def:pq}
Define
\[
q:\mathcal P\longrightarrow\Lhol G,
\qquad q(\gamma)(z)=\gamma(z+\tau)\gamma(z)^{-1},
\]
and
\[
p:\mathcal P\longrightarrow\mathcal A:=\Omega^{0,1}(\Sigma,\mathfrak g),
\qquad p(\gamma)=\gamma^{-1}\bar\partial\gamma.
\]
\end{Definition}
The map $p$ takes values in $\mathcal A$: writing
$h=q(\gamma)$, the relation $\gamma(z+\tau)=h(z)\gamma(z)$ and
$\bar\partial h=0$ give $p(\gamma)(z+\tau)=p(\gamma)(z)$.

\begin{Lemma}\label{lem:q}
The map $q:\mathcal P\to\Lhol G$ is surjective. Its fibers are precisely
the free $G_\Sigma$-orbits. Moreover, $q$ is equivariant for the left
$\Lhol G$-action and twisted conjugation on $\Lhol G$.
\end{Lemma}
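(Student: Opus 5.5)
The three claims split naturally: equivariance, then the fibers, then surjectivity.

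\emph{Equivariance.} For $\psi\in\Lhol G$ and $\gamma\in\mathcal P$,
\[
q(\psi\gamma)(z)=\psi(z+\tau)\gamma(z+\tau)\gamma(z)^{-1}\psi(z)^{-1}
=\bigl(\kappa(\psi)\,q(\gamma)\,\psi^{-1}\bigr)(z),
\]
which is the twisted-conjugation action $\psi\cdot q(\gamma)$. The right side is holomorphic, so $\psi\gamma\in\mathcal P$ and the action is well defined.

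\emph{Fibers.} For $u\in G_\Sigma$, view $u$ as a doubly periodic map on $\mathbb C/\mathbb Z$. Then
\[
q(\gamma u)(z)=\gamma(z+\tau)u(z+\tau)u(z)^{-1}\gamma(z)^{-1}=q(\gamma)(z),
\]
so $G_\Sigma$-orbits lie in fibers. Conversely, suppose $q(\gamma)=q(\gamma')$, and set $u=\gamma^{-1}\gamma'\in C^\infty(\mathbb C/\mathbb Z,G)$. Expanding $\gamma'(z+\tau)=q(\gamma)(z)\gamma'(z)$ gives $u(z+\tau)=u(z)$, so $u$ descends to $\Sigma$ and $\gamma'=\gamma u$. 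Freeness is immediate: $\gamma u=\gamma$ pointwise forces $u=e$.

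\emph{Surjectivity.} This is the main step. Given $h\in\Lhol G$, we need a smooth $\gamma$ on $\mathbb C/\mathbb Z$ with $\gamma(z+\tau)=h(z)\gamma(z)$. Because $h$ is only used through this relation, we can build $\gamma$ on the strip and extend it. First, $G$ is connected (it is simply connected), so we can choose a smooth path $\beta:[0,1]\to G$ with $\beta(t)=e$ for $t$ near $0$ and $\beta(t)=h(x)$-independent\ldots{} More precisely, we need a loop in $x$. The clean way is to use $G$ simply connected: the loop $x\mapsto h(x)$ on $\mathbb R/\mathbb Z$ is null-homotopic. Choose a smooth homotopy $H:[0,1]\times\mathbb R/\mathbb Z\to G$ with $H(t,\cdot)=e$ for $t$ near $0$ and $H(t,x)=h(x+i(t-1)\operatorname{Im}\tau\cdot 0)$\ldots{} To avoid these clumsy boundary conditions, prescribe the values directly:
\[
\gamma(x+iy)=H\bigl(y/\operatorname{Im}\tau,\,x-(y/\operatorname{Im}\tau)\operatorname{Re}\tau\bigr)
\quad\text{for } 0\le y\le\operatorname{Im}\tau .
\]
For the gluing we need something stronger: near $y=\operatorname{Im}\tau$, the function $\gamma$ must equal $h(z-\tau)\gamma(z-\tau)$ to all orders. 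Take $H$ with $H(t,\cdot)\equiv e$ for $t<\epsilon$, and let $\gamma(z)=h(z-\tau)$ for $\operatorname{Im}z>\operatorname{Im}\tau-\epsilon'$. This is possible by choosing a smooth homotopy in the strip from the constant $e$ near the bottom edge to the map $z\mapsto h(z-\tau)$ near the top edge. Such a homotopy exists because the restriction of $h(z-\tau)$ to any horizontal circle is null-homotopic, since $\pi_1(G)=0$. Smoothing is handled with collars, which is standard.

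Then define $\gamma$ on all of $\mathbb C/\mathbb Z$ by
\[
\gamma(z+n\tau)=h(z+(n-1)\tau)\cdots h(z)\,\gamma(z)
\]
for positive $n$, and analogously for negative $n$. The collar conditions make this extension smooth across the strip boundaries, and the defining relation then holds by construction, giving $q(\gamma)=h$.

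The only real obstacle is making this smooth gluing precise. The topological input, that simple connectivity makes the relevant loops null-homotopic, is what guarantees the fundamental-domain data exists.
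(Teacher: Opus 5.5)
Your proposal is correct and follows essentially the same route as the paper: the same one-line equivariance computation, the same $\tau$-periodic $u=\gamma^{-1}\gamma'$ argument for the fibers, and, for surjectivity, the same collar construction ($\gamma=e$ near the lower circle, $\gamma=h(z-\tau)$ near the upper circle, interpolation justified by $\pi_1(G)=0$, then extension via $\gamma(z+\tau)=h(z)\gamma(z)$). The abandoned formulas in your surjectivity paragraph (the ``$\ldots$'' attempts at writing $H$ explicitly) should be deleted, since the direct collar prescription you settle on already suffices.
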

\begin{proof}
Equivariance follows from
\[
q(\psi\gamma)(z)=\psi(z+\tau)q(\gamma)(z)\psi(z)^{-1}.
\]
To prove surjectivity, let $h:\mathbb C/\mathbb Z\to G$ be holomorphic.
Consider the annulus bounded by $\mathbb R/\mathbb Z$ and
$\mathbb R/\mathbb Z+\tau$. Since $G$ is simply connected, the boundary
loop $h|_{\mathbb R/\mathbb Z}$ is smoothly homotopic to the identity.
Prescribe a smooth map $\gamma$ to equal the identity on a collar of
the lower boundary and to equal $h(z-\tau)$ on a collar of the upper
boundary, and interpolate smoothly on the remaining annulus.
The relation
\[
\gamma(z+\tau)=h(z)\gamma(z)
\]
extends $\gamma$ uniquely to the cylinder. The collar prescriptions
ensure smooth agreement, including all derivatives, across each seam.
The resulting map belongs to $\mathcal P$ and satisfies $q(\gamma)=h$.

The right $G_\Sigma$-action preserves the fibers of $q$. Conversely,
if $q(\gamma_1)=q(\gamma_2)$, then
\[
u(z)=\gamma_2(z)^{-1}\gamma_1(z)
\]
satisfies $u(z+\tau)=u(z)$ and therefore descends to $G_\Sigma$.
Hence $\gamma_1=\gamma_2u$. The action is free by pointwise cancellation.
\end{proof}

\begin{Lemma}\label{lem:p}
The map $p:\mathcal P\to\Omega^{0,1}(\Sigma,\mathfrak g)$ is surjective.
Its fibers are precisely the free $\Lhol G$-orbits. It is equivariant
for the right gauge action
\[
A\cdot u=u^{-1}Au+u^{-1}\bar\partial u.
\]
\end{Lemma}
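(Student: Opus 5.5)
I will argue in parallel with Lemma~\ref{lem:q}, treating equivariance, the fiber description, and surjectivity in that order.

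\emph{Equivariance.} For $u\in G_\Sigma$, regarded as a $\tau$-periodic map on $\mathbb C/\mathbb Z$, the Leibniz rule gives
\[
p(\gamma u)=u^{-1}\gamma^{-1}\bigl(\bar\partial\gamma\, u+\gamma\,\bar\partial u\bigr)=u^{-1}p(\gamma)u+u^{-1}\bar\partial u=p(\gamma)\cdot u.
\]

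\emph{Fibers.} For $\psi\in\Lhol G$ we have $\bar\partial\psi=0$, so $p(\psi\gamma)=\gamma^{-1}\psi^{-1}\psi\,\bar\partial\gamma=p(\gamma)$. Hence $\Lhol G$-orbits lie in fibers. Conversely, suppose $p(\gamma_1)=p(\gamma_2)$ and set $\psi=\gamma_1\gamma_2^{-1}$ on $\mathbb C/\mathbb Z$. A direct computation gives
\[
\bar\partial\psi=\gamma_1\bigl(p(\gamma_1)-p(\gamma_2)\bigr)\gamma_2^{-1}=0,
\]
so $\psi$ is holomorphic on the cylinder, i.e.\ $\psi\in\Lhol G$, and $\gamma_1=\psi\gamma_2$. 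Freeness follows by pointwise cancellation.

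\emph{Surjectivity.} This is the main step. Given $A\in\Omega^{0,1}(\Sigma,\mathfrak g)$, pull it back to a $\tau$-periodic form on the cylinder $\mathbb C/\mathbb Z$. I need a smooth $\gamma:\mathbb C/\mathbb Z\to G$ with $\gamma^{-1}\bar\partial\gamma=A$, i.e.\ $\bar\partial\gamma=\gamma A$.

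Such a $\gamma$ automatically lies in $\mathcal P$. Indeed, $\gamma(z+\tau)$ solves the same equation, since $A$ is $\tau$-periodic. By the fiber argument above, $\gamma(z+\tau)\gamma(z)^{-1}$ is then holomorphic.

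The existence of $\gamma$ is the obstacle. It amounts to triviality of the holomorphic structure $\bar\partial+A$ on the trivial $G$-bundle over the open Riemann surface $\mathbb C/\mathbb Z\cong\mathbb C^*$. I would argue in three steps.
\begin{enumerate}[label=(\roman*)]
\item \emph{Local existence.} Every $(0,1)$-connection on a Riemann surface is integrable, since there are no $(0,2)$-forms. Locally one can therefore solve $\bar\partial\gamma=\gamma A$ with $\gamma$ smooth and $G$-valued. For matrix groups this is the Koszul--Malgrange theorem. For general complex $G$ one works on small discs via the standard Newlander--Nirenberg argument for the complex structure on the total space of $\mathbb C^*\times G$.
\item \emph{Gluing.} The local solutions define a holomorphic principal $G$-bundle on $\mathbb C^*$ that is smoothly trivial.
\item \emph{Triviality.} By Grauert's Oka principle on the Stein manifold $\mathbb C^*$, a topologically trivial holomorphic principal $G$-bundle is holomorphically trivial. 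A global holomorphic section gives the desired global $\gamma$.
\end{enumerate}
I expect to simply cite these results (Koszul--Malgrange/Newlander--Nirenberg and Grauert) rather than reprove them. The only care needed is to note that $C^\infty$ regularity is preserved throughout.
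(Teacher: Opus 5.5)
Your proposal is correct and follows essentially the same route as the paper. It uses the same Leibniz computations for equivariance and the fiber description, and the same observation that any solution of $\gamma^{-1}\bar\partial\gamma=A$ automatically lies in $\mathcal P$ (you argue via the translated solution $\gamma(\cdot+\tau)$ rather than computing $\psi^{-1}\bar\partial\psi$ directly). The existence argument is also the paper's, integrability in complex dimension one plus Grauert's Oka principle on the Stein curve $\mathbb C^*$, with your local-existence and gluing steps spelling out what the paper compresses into a single sentence.
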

\begin{proof}
Let $A\in\Omega^{0,1}(\Sigma,\mathfrak g)$. We seek a smooth map
$\gamma:\mathbb C/\mathbb Z\to G$ satisfying
\[
\gamma^{-1}\bar\partial\gamma=A.
\]
Any such solution automatically belongs to $\mathcal P$. Indeed, put
$\psi(z)=\gamma(z+\tau)\gamma(z)^{-1}$. Then
\begin{align*}
\psi^{-1}\bar\partial\psi
&=\operatorname{Ad}_{\gamma(z)}A(z+\tau)
  -(\bar\partial\gamma(z))\gamma(z)^{-1}\\
&=\operatorname{Ad}_{\gamma(z)}\bigl(A(z+\tau)-A(z)\bigr)=0.
\end{align*}
Thus $\psi$ is holomorphic.

For existence, pull $A$ back to the cylinder
$C:=\mathbb C/\mathbb Z\cong\mathbb C^*$.
The resulting $(0,1)$-connection is integrable for dimensional reasons.
It defines a holomorphic structure on the smoothly trivial principal
$G$-bundle over the Stein curve $C$. By the Oka-Grauert \cite{grauert1958analytische} this
bundle is holomorphically trivial. Relative to the given smooth
trivialization, a holomorphic trivialization gives a smooth map $\gamma$
satisfying $\gamma^{-1}\bar\partial\gamma=A$.

Finally, if $p(\gamma_1)=p(\gamma_2)$, then
\[
\psi=\gamma_2\gamma_1^{-1}
\]
is holomorphic, so $\gamma_2=\psi\gamma_1$. Freeness follows from
pointwise cancellation. The gauge-equivariance identity is
\[
p(\gamma u)=u^{-1}p(\gamma)u+u^{-1}\bar\partial u.
\]
\end{proof}

\begin{Lemma}\label{lem:p-q-holomorphic-submersions}
The structure maps
\[
q:\mathcal P\longrightarrow\Lhol G,
\qquad p:\mathcal P\longrightarrow\mathcal A
\]
are holomorphic surjective submersions. Their differentials are given by
\eqref{eq:dq} and \eqref{eq:dp} below.
\end{Lemma}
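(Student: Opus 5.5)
I will first compute the differentials, then argue holomorphicity and the submersion property via explicit local splittings. The tangent space of $\mathcal P$ at $\gamma$ is best described in left-trivialized form: write a tangent vector as $\delta\gamma=\gamma\,\eta$ with $\eta\in C^\infty(\mathbb C/\mathbb Z,\mathfrak g)$. Differentiating the defining condition shows that $T_\gamma\mathcal P$ consists of those $\eta$ for which $z\mapsto \operatorname{Ad}_{\gamma(z+\tau)}\bigl(\eta(z+\tau)-\eta(z)\bigr)$ is holomorphic; in right-trivialized form, with $\zeta=\delta\gamma\,\gamma^{-1}$, the condition is that $\zeta(z+\tau)-\operatorname{Ad}_{q(\gamma)(z)}\zeta(z)$ is holomorphic. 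Direct differentiation of Definition~\ref{def:pq} gives
\begin{equation}\label{eq:dq}
T_\gamma q(\gamma\eta)=\operatorname{Ad}_{\gamma(z+\tau)}\bigl(\eta(z+\tau)-\eta(z)\bigr)\,q(\gamma)(z),
\end{equation}
\begin{equation}\label{eq:dp}
T_\gamma p(\gamma\eta)=\bar\partial\eta+[p(\gamma),\eta].
\end{equation}
These are right-translated holomorphic loop, resp.\ a covariant $\bar\partial$-derivative, and both are complex-linear in $\eta$. Since the complex structure on $\mathcal P$ is induced pointwise from $G$, and $q,p$ are built from pointwise group operations and $\bar\partial$, holomorphicity follows from complex-linearity of \eqref{eq:dq} and \eqref{eq:dp}. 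Surjectivity is already Lemmas~\ref{lem:q} and~\ref{lem:p}.

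For the submersion property of $p$, I will use Lemma~\ref{lem:p}. Its fibers are free $\Lhol G$-orbits, so $\ker T_\gamma p$ contains $\{X\gamma : X\in\Lhol\mathfrak g\}$; conversely, $\bar\partial\eta+[p(\gamma),\eta]=0$ means $\gamma\eta\gamma^{-1}$ is holomorphic. Hence $\ker T_\gamma p=\Lhol\mathfrak g\cdot\gamma$. For surjectivity of $T_\gamma p$, given $a\in\Omega^{0,1}(\Sigma,\mathfrak g)$, I set $\zeta=\gamma\eta\gamma^{-1}$ and reduce to solving $\bar\partial\zeta=\operatorname{Ad}_\gamma a$ on the cylinder $\mathbb C/\mathbb Z\cong\mathbb C^*$. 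This is solvable because $\mathbb C^*$ is Stein (Dolbeault vanishing $H^{0,1}(\mathbb C^*)=0$), and any solution automatically lies in $T_\gamma\mathcal P$, by the same computation as in Lemma~\ref{lem:p}. A continuous right inverse can be obtained from a fixed solution operator for $\bar\partial$ on $\mathbb C^*$ in the Fréchet topology, which gives a local splitting $T_\gamma\mathcal P\cong\Lhol\mathfrak g\oplus\mathcal A$.

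For $q$, I will use Lemma~\ref{lem:q}: $\ker T_\gamma q=\gamma\,\mathfrak g_\Sigma$, the $\tau$-periodic $\eta$. Given $Y\in\Lhol\mathfrak g$, I need $\eta$ with $\operatorname{Ad}_{\gamma(z+\tau)}(\eta(z+\tau)-\eta(z))=Y(z)$. I will construct it exactly as in the surjectivity proof of Lemma~\ref{lem:q}: put $\eta=0$ near the lower boundary of $\Sigma_1$ and $\eta(z)=\operatorname{Ad}_{\gamma(z)^{-1}}Y(z-\tau)$ near the upper boundary, use a fixed cutoff function $\chi$ on the strip to interpolate linearly, and extend by the cocycle relation. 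Because $\chi$ is fixed and everything is linear in $Y$, this produces a continuous linear right inverse.

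The main obstacle is the infinite-dimensional meaning of ``holomorphic submersion'': one needs $\mathcal P$ to be a complex Fréchet manifold with split surjective differentials, and ideally local holomorphic sections. I will obtain local holomorphic sections directly by globalizing the linear constructions above. For $q$, the map $h\mapsto\gamma_h$ given by the cutoff interpolation $\gamma_h=\exp(\chi\cdot\log(h\,q(\gamma)^{-1})\ldots)$ near a base point is pointwise holomorphic in $h$. For $p$, I will use a holomorphically parametrized solution of $\gamma^{-1}\bar\partial\gamma=A$ near a base point, obtained by the implicit function theorem applied to the split linearization \eqref{eq:dp}. If the Nash--Moser issues arise in the Fréchet setting, I would fall back to the weaker sense used in the paper: complex-linear, surjective differentials with continuous linear right inverses.
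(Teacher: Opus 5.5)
Your differentials, your complex-linearity argument for holomorphicity, and your reduction of surjectivity to Lemmas~\ref{lem:q} and~\ref{lem:p} match the paper. Your left-trivialized $T_\gamma q$ becomes the paper's formula after $\xi=\operatorname{Ad}_\gamma\eta$. The paper proves nothing beyond this: it takes split surjectivity and the submersion property from the standing assumption of local holomorphic principal trivializations.

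Your cutoff right inverse of $T_\gamma q$ and your interpolated sections $h\mapsto\gamma_h$ are correct and go beyond the paper. They only need $h\,q(\gamma)^{-1}$ to be near $1$ on one compact annulus, and they are pointwise holomorphic in $h$.

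The step that fails is the splitting of $T_\gamma p$. There is no continuous linear $S$ on $C^\infty(\mathbb C^*,\mathfrak g)$ with $\bar\partial S=\mathrm{id}$. Indeed, $I-S\bar\partial$ would be a continuous projection onto $\mathcal O(\mathbb C^*,\mathfrak g)$. That space carries a continuous norm, so the projection could only see $f$ near one fixed compact set. Cauchy estimates would then give $\sup_{|w|=R}|g|\le C_R\sup_L|g|$ for all holomorphic $g$ and one fixed compact annulus $L$. This fails for $g=w^n$ as $n\to\pm\infty$. So the ``fixed solution operator for $\bar\partial$ on $\mathbb C^*$ in the Fr\'echet topology'' does not exist, and your argument does not establish the splitting $T_\gamma\mathcal P\cong\Lhol\mathfrak g\oplus\mathcal A$.

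The conclusion can be rescued by working on the compact torus instead of $\mathbb C^*$. Restricted to $\ker T_\gamma q=\gamma\,\mathfrak g_\Sigma$, the map $T_\gamma p$ is the elliptic operator $D=D_A^{0,1}$ on $\Sigma$, with $A=p(\gamma)$. Its range is closed of finite codimension; the cokernel is $H^1$ of the holomorphic adjoint bundle defined by $A$. The operator $D^*(DD^*)^{-1}$, applied after $L^2$-projection off $\ker D^*$, is a continuous right inverse onto that range. The finitely many missing directions are then hit by finitely many tangent vectors, which your Dolbeault argument does supply.

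Separately, the implicit function theorem you invoke for holomorphic local sections of $p$ is not available in Fr\'echet spaces without Nash--Moser-type tame estimates. So that part of your argument, like the paper's, rests on the assumed local principal trivializations rather than proving them.
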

\begin{proof}
Surjectivity follows from Lemmas~\ref{lem:q} and~\ref{lem:p}.

For $q$, write a tangent vector in right trivialization as
\[
\delta\gamma(z)=\xi(z)\gamma(z),\qquad h=q(\gamma).
\]
Differentiating gives
\begin{align*}
(T_\gamma q)(\delta\gamma)(z)
&=\xi(z+\tau)\gamma(z+\tau)\gamma(z)^{-1}
  -\gamma(z+\tau)\gamma(z)^{-1}\xi(z)\\
&=\xi(z+\tau)h(z)-h(z)\xi(z).
\end{align*}
Hence
\begin{equation}\label{eq:dq}
(T_\gamma q)(\delta\gamma)(z)h(z)^{-1}
=\xi(z+\tau)-\operatorname{Ad}_{h(z)}\xi(z).
\end{equation}
This differential is complex linear, with kernel given by the
infinitesimal right gauge directions. Local principal trivializations
imply split surjectivity of $Tq$.

For $p$, write $\delta\gamma=\gamma\xi$ and
$A=\gamma^{-1}\bar\partial\gamma$. Then
\begin{align*}
(T_\gamma p)(\gamma\xi)
&=\left.\frac{d}{dt}\right|_{t=0}
(\gamma e^{t\xi})^{-1}\bar\partial(\gamma e^{t\xi})\\
&=\bar\partial\xi+[A,\xi].
\end{align*}
Thus
\begin{equation}\label{eq:dp}
(T_\gamma p)(\gamma\xi)=D_A^{0,1}\xi,
\qquad D_A^{0,1}:=\bar\partial+[A,\,\cdot\,].
\end{equation}
In the trivialization determined by $\gamma$,
\[
D_A^{0,1}
=\operatorname{Ad}_{\gamma^{-1}}\circ\bar\partial
 \circ\operatorname{Ad}_{\gamma}.
\]
In particular, if $\delta\gamma=r\gamma$, then
\[
(T_\gamma p)(\delta\gamma)
=\operatorname{Ad}_{\gamma^{-1}}\bar\partial r.
\]
This differential is complex linear. The assumed local holomorphic
principal trivializations make $p$ and $q$ holomorphic submersions.
\end{proof}

\subsection{The $2$-form on $\mathcal P$}
The correspondence form is modelled on the Alekseev-Meinrenken path
fibration \cite{alekseev2009atiyah}.

Let
\[
\operatorname{ev}_z:\mathcal P\longrightarrow G
\]
denote evaluation at $z\in\mathbb C/\mathbb Z$, and let
$\theta^L,\theta^R\in\Omega^1(G,\mathfrak g)$ be the left and right
Maurer-Cartan forms.
\begin{Definition}\label{def:varpi}
Define the holomorphic $2$-form $\varpi\in\Omega^2(\mathcal P)$ by
\begin{equation}\label{eq:varpi}
\begin{split}
\varpi={}&-\frac12\int_{\Sigma_1}
\left\langle\operatorname{ev}_z^*\theta^R,
\frac{\partial}{\partial\bar z}
 (\operatorname{ev}_z^*\theta^R)\right\rangle\,dz\wedge d\bar z\\
&+\frac12\int_{\mathbb R/\mathbb Z}
\left\langle\operatorname{ev}_{z+\tau}^*\theta^L,
\operatorname{ev}_z^*\theta^L\right\rangle\,dz.
\end{split}
\end{equation}
\end{Definition}

\subsection{Differential and Contraction Identities}
Define the holomorphic $3$-form
\[
\eta=\int_{\mathbb R/\mathbb Z}\operatorname{ev}_z^*\eta_G\,dz
\quad\text{on }\Lhol G,
\]
where
\[
\eta_G=\frac1{12}\langle[\theta^L,\theta^L],\theta^L\rangle
=\frac1{12}\langle[\theta^R,\theta^R],\theta^R\rangle
\]
is the Cartan $3$-form on $G$.
For $X\in\Lhol\mathfrak g$, define
\[
\alpha(X)=\frac12\int_{\mathbb R/\mathbb Z}
\left(\langle X(z),\operatorname{ev}_z^*\theta^L\rangle
+\langle X(z+\tau),\operatorname{ev}_z^*\theta^R\rangle\right)\,dz.
\]

\begin{Proposition}[Structure Identities for $\varpi$]
\label{prop:varpi-identities}
The $2$-form $\varpi$ satisfies the following identities.
\begin{enumerate}
\item $d\varpi=q^*\eta$.
\item For $X\in\Lhol\mathfrak g$,
\[
\iota_{X_{\mathcal P}}\varpi=-q^*\alpha(X).
\]
\item For $\xi\in\mathfrak g_\Sigma$,
\[
\iota_{\xi_{\mathcal P}}\varpi=-p^*\langle d\mu,\xi\rangle,
\]
where $\mu(A)(\xi)=\langle A,\xi\rangle_\Sigma$.
\end{enumerate}
\end{Proposition}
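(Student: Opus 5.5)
The plan is to work in right trivialization throughout. Write a tangent vector at $\gamma$ as $\delta\gamma=r\gamma$ with $r\in C^\infty(\mathbb C/\mathbb Z,\mathfrak g)$. The constraint defining $\mathcal P$ linearizes to: $r(z+\tau)-\operatorname{Ad}_{h(z)}r(z)$ is holomorphic, where $h=q(\gamma)$. In this trivialization $\operatorname{ev}_z^*\theta^R(\delta\gamma)=r(z)$ and $\operatorname{ev}_z^*\theta^L(\delta\gamma)=\operatorname{Ad}_{\gamma(z)^{-1}}r(z)$. I would also record the quasi-periodicity
\[
\operatorname{ev}_{z+\tau}^*\theta^R=\operatorname{Ad}_{h}\operatorname{ev}_z^*\theta^R+q^*\theta^R_z ,
\]
with $\operatorname{ev}_{z+\tau}^*\theta^L=\operatorname{Ad}_{\gamma(z)^{-1}}(\operatorname{ev}_z^*\theta^R+\operatorname{Ad}_{h^{-1}}q^*\theta^R_z)$. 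Every identity then becomes an integral of explicit $\mathfrak g$-valued functions of $r$, and the strategy in each case is to reduce bulk integrals over $\Sigma_1$ to boundary integrals via Stokes' formula \eqref{eq:stokes-convention}.

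\emph{Part (3).} For $\xi\in\mathfrak g_\Sigma$ we have $\xi_{\mathcal P}=\gamma\xi$, i.e.\ $r=\operatorname{Ad}_\gamma\xi$. I would use $\partial_{\bar z}$-derivatives together with \eqref{eq:dp}, which gives $T p(r\gamma)=\operatorname{Ad}_{\gamma^{-1}}\bar\partial r$.
\begin{itemize}
\item Contracting the bulk term gives $-\tfrac12\int_{\Sigma_1}\bigl(\langle \operatorname{Ad}_\gamma\xi,\partial_{\bar z}r\rangle-\langle r,\partial_{\bar z}\operatorname{Ad}_\gamma\xi\rangle\bigr)\,dz\wedge d\bar z$.
\item Integrating the second piece by parts produces $-\int_{\Sigma_1}\langle\xi,\operatorname{Ad}_{\gamma^{-1}}\partial_{\bar z}r\rangle$, which is exactly $-p^*\langle d\mu,\xi\rangle(r\gamma)$, plus a boundary term $\tfrac12\int_{\mathbb R/\mathbb Z}[\langle r,\operatorname{Ad}_\gamma\xi\rangle]_z^{z+\tau}\,dz$.
\item Using $\xi(z+\tau)=\xi(z)$ and the quasi-periodicity, this boundary term should cancel against the contraction of the boundary term $\tfrac12\int\langle\operatorname{ev}^*_{z+\tau}\theta^L,\operatorname{ev}^*_z\theta^L\rangle$, where $\operatorname{ev}^*_z\theta^L(\gamma\xi)=\xi(z)$.
\end{itemize}

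\emph{Part (2).} Here $X_{\mathcal P}=X\gamma$ with $X$ holomorphic, so $\partial_{\bar z}r=0$. The bulk contraction reduces to $\tfrac12\int_{\Sigma_1}\langle r,\partial_{\bar z}X\rangle\cdots$ type terms, which vanish after integration by parts up to a boundary term. What remains is a combination of boundary integrals in $X(z)$, $X(z+\tau)$ and $r$. Rewriting $r(z+\tau)-\operatorname{Ad}_h r(z)=Tq(r\gamma)h^{-1}$ via \eqref{eq:dq} should recognize the result as $-\alpha(X)$ evaluated on $Tq(r\gamma)$, using $q^*\theta^R_z=r(z+\tau)-\operatorname{Ad}_hr(z)$ and $q^*\theta^L_z=\operatorname{Ad}_{h^{-1}}r(z+\tau)-r(z)$.

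\emph{Part (1).} I expect this to be the main obstacle because of the bookkeeping. I would compute $d$ of each term using the structure equations $d\theta^R=\tfrac12[\theta^R,\theta^R]$ and $d\theta^L=-\tfrac12[\theta^L,\theta^L]$, with $d$ commuting with $\partial_{\bar z}$ and with integration.
\begin{itemize}
\item In the bulk term, the cubic integrand $\langle[\theta^R,\theta^R],\partial_{\bar z}\theta^R\rangle$ is a total $\partial_{\bar z}$-derivative of $\tfrac13\langle[\theta^R,\theta^R],\theta^R\rangle$ by invariance. Stokes' formula therefore turns it into $\int_{\mathbb R/\mathbb Z}\bigl(\eta_G(z+\tau)-\eta_G(z)\bigr)$ pulled back by $\operatorname{ev}^*$.
\item Via the quasi-periodicity and the Polyakov–Wiegmann identity for $\eta_G$ on the product $\gamma(z+\tau)=h(z)\gamma(z)$, this difference equals $q^*\eta$ minus $d$ of the boundary term $\tfrac12\int\langle\operatorname{ev}^*_{z+\tau}\theta^L,\operatorname{ev}^*_z\theta^L\rangle$. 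The latter is precisely the usual $\tfrac12\langle \mathrm{pr}_1^*\theta^L,\mathrm{pr}_2^*\theta^R\rangle$ correction.
\end{itemize}
To manage this step I would first check it on a finite-dimensional model: the multiplication map $G\times G\to G$ and $\eta_G(ab)=\eta_G(a)+\eta_G(b)-d\tfrac12\langle a^*\theta^L,b^*\theta^R\rangle$, presumably the identity the appendix supplies. I would then integrate over $\mathbb R/\mathbb Z$, tracking orientation signs from \eqref{eq:boundary-orientation}.
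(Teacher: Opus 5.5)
Your proposal is correct and follows essentially the same route as the paper. Stokes' formula \eqref{eq:stokes-convention} converts the bulk contractions in (2) and (3) into boundary integrals, which combine with, respectively cancel against, the contraction of the boundary term. For (1) you use $d\theta^R=\tfrac12[\theta^R,\theta^R]$, the total-derivative identity for the cubic integrand, and Polyakov--Wiegmann. Your multiplicative form of Polyakov--Wiegmann on $\gamma(z+\tau)=h(z)\gamma(z)$ is equivalent to the paper's division map $F(a,b)=ab^{-1}$, since $\langle(\operatorname{ev}_z\circ q)^*\theta^L,\operatorname{ev}_z^*\theta^R\rangle=\langle\operatorname{ev}_{z+\tau}^*\theta^L,\operatorname{ev}_z^*\theta^L\rangle$.

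In (2), keep the two vectors notationally separate. The term $\langle r,\partial_{\bar z}X\rangle$ vanishes pointwise because $X$ is holomorphic, and $\langle X,\partial_{\bar z}r\rangle=\partial_{\bar z}\langle X,r\rangle$ is the piece that Stokes turns into the boundary integrals.
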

\begin{proof}
Write $\varpi=\varpi_1+\varpi_2$, where
\[
\varpi_1=-\frac12\int_{\Sigma_1}
\left\langle\operatorname{ev}_z^*\theta^R,
\frac{\partial}{\partial\bar z}(\operatorname{ev}_z^*\theta^R)
\right\rangle\,dz\wedge d\bar z,
\]
and
\[
\varpi_2=\frac12\int_{\mathbb R/\mathbb Z}
\left\langle\operatorname{ev}_{z+\tau}^*\theta^L,
\operatorname{ev}_z^*\theta^L\right\rangle\,dz.
\]

\smallskip
\noindent\textbf{(i) The differential $d\varpi=q^*\eta$.}
For $\gamma\in\mathcal P$,
\[
q(\gamma)(z)=\gamma(z+\tau)\gamma(z)^{-1}.
\]
Let $F:G\times G\to G$ be given by $F(a,b)=ab^{-1}$.
The Polyakov-Wiegmann identity gives
\[
F^*\eta_G=\operatorname{pr}_1^*\eta_G-
\operatorname{pr}_2^*\eta_G+
\frac12d\left\langle\operatorname{pr}_1^*\theta^L,
\operatorname{pr}_2^*\theta^L\right\rangle.
\]
Since $\operatorname{ev}_z\circ q=F\circ
(\operatorname{ev}_{z+\tau},\operatorname{ev}_z)$, we obtain
\begin{equation}\label{eq:q-pullback-eta}
\begin{split}
q^*\operatorname{ev}_z^*\eta_G
={}&\operatorname{ev}_{z+\tau}^*\eta_G-
\operatorname{ev}_z^*\eta_G\\
&+\frac12d\left\langle\operatorname{ev}_{z+\tau}^*\theta^L,
\operatorname{ev}_z^*\theta^L\right\rangle.
\end{split}
\end{equation}

We next compute $d\varpi_1$. The right Maurer-Cartan equation is
\[
d\theta^R=\frac12[\theta^R,\theta^R].
\]
Differentiating under the integral sign gives
\begin{align*}
d\varpi_1
&=-\frac14\int_{\Sigma_1}
\left\langle\operatorname{ev}_z^*[\theta^R,\theta^R],
\frac{\partial}{\partial\bar z}\operatorname{ev}_z^*\theta^R
\right\rangle\,dz\wedge d\bar z\\
&\qquad+\frac14\int_{\Sigma_1}
\left\langle\operatorname{ev}_z^*\theta^R,
\frac{\partial}{\partial\bar z}
\operatorname{ev}_z^*[\theta^R,\theta^R]
\right\rangle\,dz\wedge d\bar z.
\end{align*}
Put
\[
C(z)=\operatorname{ev}_z^*
\langle\theta^R,[\theta^R,\theta^R]\rangle.
\]
By invariance and the graded symmetry of the pairing,
\begin{align*}
\frac{\partial C}{\partial\bar z}
&=\left\langle\frac{\partial}{\partial\bar z}
\operatorname{ev}_z^*\theta^R,
\operatorname{ev}_z^*[\theta^R,\theta^R]\right\rangle
+\left\langle\operatorname{ev}_z^*\theta^R,
\frac{\partial}{\partial\bar z}
\operatorname{ev}_z^*[\theta^R,\theta^R]\right\rangle\\
&=3\left\langle\operatorname{ev}_z^*[\theta^R,\theta^R],
\frac{\partial}{\partial\bar z}\operatorname{ev}_z^*\theta^R
\right\rangle.
\end{align*}
Consequently,
\begin{align*}
d\varpi_1
&=-\frac12\int_{\Sigma_1}
\left\langle\operatorname{ev}_z^*[\theta^R,\theta^R],
\frac{\partial}{\partial\bar z}\operatorname{ev}_z^*\theta^R
\right\rangle\,dz\wedge d\bar z
+\frac14\int_{\Sigma_1}\frac{\partial C}{\partial\bar z}
\,dz\wedge d\bar z\\
&=\left(-\frac16+\frac14\right)
\int_{\Sigma_1}\frac{\partial C}{\partial\bar z}
\,dz\wedge d\bar z\\
&=\frac1{12}\int_{\mathbb R/\mathbb Z}
\bigl(C(z+\tau)-C(z)\bigr)\,dz.
\end{align*}
Here we used \eqref{eq:stokes-convention}. Thus
\begin{equation}\label{eq:d-varpi-1}
d\varpi_1=-\int_{\mathbb R/\mathbb Z}
\left(\operatorname{ev}_z^*\eta_G-
\operatorname{ev}_{z+\tau}^*\eta_G\right)\,dz.
\end{equation}
On the other hand,
\[
d\varpi_2=\frac12\int_{\mathbb R/\mathbb Z}
d\left\langle\operatorname{ev}_{z+\tau}^*\theta^L,
\operatorname{ev}_z^*\theta^L\right\rangle\,dz.
\]
Combining this identity with \eqref{eq:d-varpi-1} gives
\begin{align*}
d\varpi
&=\int_{\mathbb R/\mathbb Z}
\Bigl(\operatorname{ev}_{z+\tau}^*\eta_G-
\operatorname{ev}_z^*\eta_G
+\frac12d\langle\operatorname{ev}_{z+\tau}^*\theta^L,
\operatorname{ev}_z^*\theta^L\rangle\Bigr)\,dz\\
&=\int_{\mathbb R/\mathbb Z}
q^*\operatorname{ev}_z^*\eta_G\,dz
=q^*\eta,
\end{align*}
by \eqref{eq:q-pullback-eta}.

\smallskip
\noindent\textbf{(ii) Contraction with the $\Lhol G$-action.}
Let $X\in\Lhol\mathfrak g$. The fundamental vector for left
multiplication is
\[
X_{\mathcal P}(\gamma)(z)=X(z)\gamma(z).
\]
In particular,
\[
\iota_{X_{\mathcal P}}\operatorname{ev}_z^*\theta^R=X(z).
\]
Therefore
\begin{align*}
\iota_{X_{\mathcal P}}\varpi_1
&=-\frac12\int_{\Sigma_1}
\left\langle X(z),\frac{\partial}{\partial\bar z}
\operatorname{ev}_z^*\theta^R\right\rangle\,dz\wedge d\bar z\\
&\qquad+\frac12\int_{\Sigma_1}
\left\langle\operatorname{ev}_z^*\theta^R,
\frac{\partial X(z)}{\partial\bar z}\right\rangle\,dz\wedge d\bar z.
\end{align*}
Since $X$ is holomorphic, the second term vanishes. If
\[
f(z)=\langle X(z),\operatorname{ev}_z^*\theta^R\rangle,
\]
then
\[
\frac{\partial f}{\partial\bar z}
=\left\langle X(z),\frac{\partial}{\partial\bar z}
\operatorname{ev}_z^*\theta^R\right\rangle.
\]
Using \eqref{eq:stokes-convention}, we obtain
\begin{align}
\iota_{X_{\mathcal P}}\varpi_1
&=\frac12\int_{\mathbb R/\mathbb Z}
\langle X(z),\operatorname{ev}_z^*\theta^R\rangle\,dz\notag\\
&\qquad-\frac12\int_{\mathbb R/\mathbb Z}
\langle X(z+\tau),\operatorname{ev}_{z+\tau}^*\theta^R\rangle\,dz.
\label{eq:contraction-varpi1}
\end{align}

For the boundary term, contraction gives
\begin{align*}
\iota_{X_{\mathcal P}}\varpi_2
&=-\frac12\int_{\mathbb R/\mathbb Z}
\left\langle\operatorname{Ad}_{\gamma(z)^{-1}}X(z),
\operatorname{ev}_{z+\tau}^*\theta^L\right\rangle\,dz\\
&\qquad+\frac12\int_{\mathbb R/\mathbb Z}
\left\langle\operatorname{Ad}_{\gamma(z+\tau)^{-1}}X(z+\tau),
\operatorname{ev}_z^*\theta^L\right\rangle\,dz.
\end{align*}
By invariance of the bilinear form, this becomes
\begin{align}
\iota_{X_{\mathcal P}}\varpi_2
&=-\frac12\int_{\mathbb R/\mathbb Z}
\left\langle X(z),\operatorname{Ad}_{\gamma(z)}
\operatorname{ev}_{z+\tau}^*\theta^L\right\rangle\,dz\notag\\
&\qquad+\frac12\int_{\mathbb R/\mathbb Z}
\left\langle X(z+\tau),\operatorname{Ad}_{\gamma(z+\tau)}
\operatorname{ev}_z^*\theta^L\right\rangle\,dz.
\label{eq:contraction-varpi2}
\end{align}
We now use
\[
q^*\operatorname{ev}_z^*\theta^L
=\operatorname{Ad}_{\gamma(z)}\operatorname{ev}_{z+\tau}^*\theta^L
-\operatorname{ev}_z^*\theta^R
\]
and
\[
q^*\operatorname{ev}_z^*\theta^R
=\operatorname{ev}_{z+\tau}^*\theta^R
-\operatorname{Ad}_{\gamma(z+\tau)}\operatorname{ev}_z^*\theta^L.
\]
Adding \eqref{eq:contraction-varpi1} and
\eqref{eq:contraction-varpi2}, we find
\begin{align*}
\iota_{X_{\mathcal P}}\varpi
&=-\frac12\int_{\mathbb R/\mathbb Z}
\left\langle X(z),
\operatorname{Ad}_{\gamma(z)}\operatorname{ev}_{z+\tau}^*\theta^L
-\operatorname{ev}_z^*\theta^R\right\rangle\,dz\\
&\qquad-\frac12\int_{\mathbb R/\mathbb Z}
\left\langle X(z+\tau),
\operatorname{ev}_{z+\tau}^*\theta^R
-\operatorname{Ad}_{\gamma(z+\tau)}\operatorname{ev}_z^*\theta^L
\right\rangle\,dz\\
&=-\frac12\int_{\mathbb R/\mathbb Z}
\left(\langle X(z),q^*\operatorname{ev}_z^*\theta^L\rangle
+\langle X(z+\tau),q^*\operatorname{ev}_z^*\theta^R\rangle\right)\,dz\\
&=-q^*\alpha(X).
\end{align*}

\smallskip
\noindent\textbf{(iii) Contraction with the $G_\Sigma$-action.}
Let $\xi\in\mathfrak g_\Sigma$. First observe that
\[
d\left(\gamma(z)^{-1}\frac{\partial\gamma(z)}{\partial\bar z}\right)
=\operatorname{Ad}_{\gamma(z)^{-1}}
\frac{\partial}{\partial\bar z}(\operatorname{ev}_z^*\theta^R).
\]
Consequently,
\[
p^*\langle d\mu,\xi\rangle
=\int_{\Sigma_1}
\left\langle\operatorname{Ad}_{\gamma(z)}\xi(z),
\frac{\partial}{\partial\bar z}(\operatorname{ev}_z^*\theta^R)
\right\rangle\,dz\wedge d\bar z.
\]
For the right action $\gamma\cdot u=\gamma u$, the fundamental vector is
$\xi_{\mathcal P}(\gamma)=\gamma\xi$, and hence
\[
\iota_{\xi_{\mathcal P}}\operatorname{ev}_z^*\theta^R
=\operatorname{Ad}_{\gamma(z)}\xi(z).
\]
It follows that
\begin{align*}
\iota_{\xi_{\mathcal P}}\varpi_1
&=-\frac12\int_{\Sigma_1}
\left\langle\operatorname{Ad}_{\gamma(z)}\xi(z),
\frac{\partial}{\partial\bar z}\operatorname{ev}_z^*\theta^R
\right\rangle\,dz\wedge d\bar z\\
&\qquad+\frac12\int_{\Sigma_1}
\left\langle\operatorname{ev}_z^*\theta^R,
\frac{\partial}{\partial\bar z}
(\operatorname{Ad}_{\gamma(z)}\xi(z))\right\rangle\,dz\wedge d\bar z\\
&=-p^*\langle d\mu,\xi\rangle
+\frac12\int_{\Sigma_1}\frac{\partial}{\partial\bar z}
\left\langle\operatorname{ev}_z^*\theta^R,
\operatorname{Ad}_{\gamma(z)}\xi(z)\right\rangle\,dz\wedge d\bar z.
\end{align*}
Using
\[
\left\langle\operatorname{ev}_z^*\theta^R,
\operatorname{Ad}_{\gamma(z)}\xi(z)\right\rangle
=\langle\operatorname{ev}_z^*\theta^L,\xi(z)\rangle,
\]
Stokes' theorem gives
\begin{align*}
\iota_{\xi_{\mathcal P}}\varpi_1
&=-p^*\langle d\mu,\xi\rangle
+\frac12\int_{\mathbb R/\mathbb Z}
\langle\xi(z+\tau),\operatorname{ev}_{z+\tau}^*\theta^L\rangle\,dz\\
&\qquad-\frac12\int_{\mathbb R/\mathbb Z}
\langle\xi(z),\operatorname{ev}_z^*\theta^L\rangle\,dz.
\end{align*}
On the other hand,
\begin{align*}
\iota_{\xi_{\mathcal P}}\varpi_2
&=\frac12\int_{\mathbb R/\mathbb Z}
\langle\xi(z+\tau),\operatorname{ev}_z^*\theta^L\rangle\,dz\\
&\qquad-\frac12\int_{\mathbb R/\mathbb Z}
\langle\xi(z),\operatorname{ev}_{z+\tau}^*\theta^L\rangle\,dz.
\end{align*}
Since $\xi(z+\tau)=\xi(z)$, these boundary contributions cancel
pairwise. Therefore
\[
\iota_{\xi_{\mathcal P}}\varpi=-p^*\langle d\mu,\xi\rangle.
\]
\end{proof}

\section{The Symplectic and Quasi-Symplectic Groupoids}
\subsection{The symplectic groupoid $\mathcal G_1$}\label{sec:G1}
Consider the fiber product
\[
\mathcal P\times_q\mathcal P
:=\{(p_1,p_2)\in\mathcal P\times\mathcal P\mid q(p_1)=q(p_2)\}.
\]
The diagonal left action of $\Lhol G$ is
\[
\psi\cdot(p_1,p_2)=(\psi\cdot p_1,\psi\cdot p_2).
\]
Since $q$ is equivariant for twisted conjugation, this action preserves
the fiber product. Define
\[
\mathcal G_1:=(\mathcal P\times_q\mathcal P)/\Lhol G
\rightrightarrows\mathcal A,
\]
with source and target
\[
s([p_1,p_2])=p(p_2),\qquad t([p_1,p_2])=p(p_1).
\]
The quotient is a holomorphic groupoid. It is naturally isomorphic to
\[
G_\Sigma\ltimes\mathcal A\rightrightarrows\mathcal A
\]
for the left gauge action
\[
u\cdot A=uAu^{-1}-(\bar\partial u)u^{-1}.
\]
Explicitly, if $p_1=p_2u^{-1}$, the quotient arrow corresponds to
$(u,p(p_2))$. This left action is the inverse of the right gauge action
used in Lemma~\ref{lem:p}.

\begin{Lemma}
The $2$-form
\[
\operatorname{pr}_1^*\varpi-\operatorname{pr}_2^*\varpi
\]
is closed and $\Lhol G$-basic. It therefore descends to a closed
holomorphic $2$-form $\omega_1$ on $\mathcal G_1$.
\end{Lemma}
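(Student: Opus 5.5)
We prove closedness and basicness separately. Then we invoke the standard fact that a basic form for a free, proper holomorphic action with holomorphic local slices descends. Here the quotient map $\mathcal P\times_q\mathcal P\to\mathcal G_1$ is a principal $\Lhol G$-bundle by Lemma~\ref{lem:p}, since the action is free on each factor and $p\times p$ separates orbits.

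\textbf{Closedness.} By Proposition~\ref{prop:varpi-identities}(i),
\[
d(\operatorname{pr}_1^*\varpi-\operatorname{pr}_2^*\varpi)
=\operatorname{pr}_1^*q^*\eta-\operatorname{pr}_2^*q^*\eta .
\]
On the fiber product we have $q\circ\operatorname{pr}_1=q\circ\operatorname{pr}_2$, so the two terms coincide and the difference vanishes. Holomorphy is inherited from $\varpi$.

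\textbf{Horizontality.} Let $X\in\Lhol\mathfrak g$. The fundamental vector field of the diagonal action is $(X_{\mathcal P},X_{\mathcal P})$, tangent to the fiber product, and it is $\operatorname{pr}_i$-related to $X_{\mathcal P}$. By Proposition~\ref{prop:varpi-identities}(ii),
\[
\iota_{(X_{\mathcal P},X_{\mathcal P})}(\operatorname{pr}_1^*\varpi-\operatorname{pr}_2^*\varpi)
=-\operatorname{pr}_1^*q^*\alpha(X)+\operatorname{pr}_2^*q^*\alpha(X)=0.
\]
The last equality again uses $q\circ\operatorname{pr}_1=q\circ\operatorname{pr}_2$.

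\textbf{Invariance.} $\Lhol G$ is connected, being holomorphic maps from $\mathbb C^*$ into a simply connected group. Note that $\pi_0$ is given by $\pi_1(G)=0$, and this needs a brief justification or can be replaced by a direct check. Given connectedness, invariance follows from Cartan's formula: $L_{X}=d\iota_X+\iota_X d$ annihilates the form by closedness and horizontality.

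Alternatively, one checks invariance of $\varpi$ directly. The Maurer--Cartan form $\operatorname{ev}_z^*\theta^L$ is invariant under left multiplication by $\psi$, so $\varpi_2$ is invariant. For $\varpi_1$, left translation changes $\operatorname{ev}_z^*\theta^R$ to $\operatorname{Ad}_{\psi(z)}\operatorname{ev}_z^*\theta^R$. Since $\psi$ is holomorphic, $\partial_{\bar z}$ commutes with $\operatorname{Ad}_{\psi(z)}$, and invariance of the pairing then gives invariance of $\varpi_1$. This direct route avoids the connectedness question, so I would use it.

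\textbf{Main obstacle.} The delicate step is the descent in the infinite-dimensional holomorphic setting. One needs that $\mathcal P\times_q\mathcal P\to\mathcal G_1\cong G_\Sigma\ltimes\mathcal A$ admits local holomorphic sections. These come from the identification $[p_1,p_2]\mapsto(u,p(p_2))$ together with local holomorphic sections of $p$. The latter are assumed, as in Lemma~\ref{lem:p-q-holomorphic-submersions}. Pulling the form back along such sections defines $\omega_1$. Well-definedness follows from basicness, and closedness follows from the fact that the quotient map is a submersion and pullback commutes with $d$.
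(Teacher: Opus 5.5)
Your proof is correct and follows the paper's argument: you check $\Lhol G$-invariance of $\varpi$ directly (using $\partial_{\bar z}\psi=0$ for the bulk term and left-invariance of $\theta^L$ for the boundary term), get horizontality from Proposition~\ref{prop:varpi-identities}(ii) and closedness from Proposition~\ref{prop:varpi-identities}(i), both via $q\circ\operatorname{pr}_1=q\circ\operatorname{pr}_2$, and then descend through the local principal quotient.

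One aside in your opening is false, though nothing depends on it. $p\times p$ does not separate diagonal orbits, because it descends to the anchor $(t,s)$ of $\mathcal G_1$ and forgets stabilizers. For example, with $c\neq e$ a constant in $G$, the pairs $(c^{-1},e)$ and $(e,e)$ both map to $(0,0)$ but lie in different orbits. The correct orbit invariant is $(u,p(p_2))$ with $p_1=p_2u^{-1}$. That is exactly what your final paragraph uses to build the local holomorphic sections $(u,A)\mapsto(\sigma(A)u^{-1},\sigma(A))$. Those sections spell out the descent that the paper leaves to its standing local-trivialization assumptions.
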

\begin{proof}
First, $\varpi$ is invariant under each fixed holomorphic loop $\psi$.
Indeed, under $\gamma\mapsto\psi\gamma$, the right Maurer-Cartan form
on the mapping space transforms by $\operatorname{Ad}_\psi$; because
$\bar\partial\psi=0$, its $\bar z$-derivative transforms in the same
way. The bulk term is therefore invariant. The left Maurer-Cartan
form is unchanged under fixed left multiplication, so the boundary
term is invariant as well.

For $X\in\Lhol\mathfrak g$, contraction with the diagonal fundamental
vector gives
\begin{align*}
&\iota_{(X_{\mathcal P},X_{\mathcal P})}
(\operatorname{pr}_1^*\varpi-\operatorname{pr}_2^*\varpi)\\
&\qquad=\operatorname{pr}_1^*(\iota_{X_{\mathcal P}}\varpi)
-\operatorname{pr}_2^*(\iota_{X_{\mathcal P}}\varpi)\\
&\qquad=-\operatorname{pr}_1^*q^*\alpha(X)
+\operatorname{pr}_2^*q^*\alpha(X)=0,
\end{align*}
since $q\circ\operatorname{pr}_1=q\circ\operatorname{pr}_2$ on the
fiber product. Thus the form is basic.
Finally,
\begin{align*}
d(\operatorname{pr}_1^*\varpi-\operatorname{pr}_2^*\varpi)
&=\operatorname{pr}_1^*q^*\eta-
\operatorname{pr}_2^*q^*\eta=0.
\end{align*}
The local principal quotient therefore carries the descended closed
holomorphic form $\omega_1$.
\end{proof}

\begin{Lemma}
The form $\omega_1$ is multiplicative.
\end{Lemma}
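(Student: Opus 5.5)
The plan is to verify multiplicativity, $m^*\omega_1=\operatorname{pr}_1^*\omega_1+\operatorname{pr}_2^*\omega_1$ on composable pairs, by lifting everything to $\mathcal P$. Since the quotient map $\pi:\mathcal P\times_q\mathcal P\to\mathcal G_1$ is a surjective submersion and $\omega_1$ is defined by $\pi^*\omega_1=\operatorname{pr}_1^*\varpi-\operatorname{pr}_2^*\varpi$, the identity can be checked after pullback to a space that submerses onto the space of composable pairs $\mathcal G_1^{(2)}$.

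First I will describe the composable pairs. Take two arrows $[p_1,p_2]$ and $[p_2',p_3]$ with $s([p_1,p_2])=t([p_2',p_3])$, that is, $p(p_2)=p(p_2')$. By Lemma~\ref{lem:p}, $p_2'=\psi p_2$ for a unique $\psi\in\Lhol G$. Using the diagonal $\Lhol G$-action, replace $(p_2',p_3)$ by $(p_2,\psi^{-1}p_3)$. Consequently the triple fiber product
\[
\mathcal P\times_q\mathcal P\times_q\mathcal P
=\{(p_1,p_2,p_3)\mid q(p_1)=q(p_2)=q(p_3)\}
\]
maps onto $\mathcal G_1^{(2)}$ by $(p_1,p_2,p_3)\mapsto([p_1,p_2],[p_2,p_3])$. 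This map is a surjective submersion: it is the quotient by the diagonal $\Lhol G$-action, and the local principal trivializations of Lemma~\ref{lem:p-q-holomorphic-submersions} give local sections. On this space the groupoid multiplication lifts to
\[
m([p_1,p_2],[p_2,p_3])=[p_1,p_3].
\]
This matches $G_\Sigma\ltimes\mathcal A$: if $p_1=p_2u^{-1}$ and $p_2=p_3v^{-1}$, then $p_1=p_3(uv)^{-1}$, so the composite corresponds to the product $uv$ at the correct source point.

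Next I will compute the pullbacks. Write $\pi_{ij}$ for the projection of the triple fiber product onto factors $(i,j)$. Then:
\begin{itemize}
\item $m^*\omega_1$ pulls back to $\pi_{13}^*(\operatorname{pr}_1^*\varpi-\operatorname{pr}_2^*\varpi)=P_1^*\varpi-P_3^*\varpi$;
\item $\operatorname{pr}_1^*\omega_1+\operatorname{pr}_2^*\omega_1$ pulls back to $(P_1^*\varpi-P_2^*\varpi)+(P_2^*\varpi-P_3^*\varpi)$.
\end{itemize}
Here $P_i$ denotes projection to the $i$-th factor. The two expressions agree by telescoping. Since the pullback map on forms along a surjective submersion is injective, multiplicativity on $\mathcal G_1^{(2)}$ follows.

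The only real obstacle is the well-definedness bookkeeping. I need the identification of $\mathcal G_1^{(2)}$ with the quotient of the triple fiber product to be a genuine surjective submersion, so that injectivity of pullback applies. I also need the lifted multiplication to be independent of representatives. The latter follows from the $\Lhol G$-invariance of $\varpi$ established in the previous lemma, together with the freeness statements in Lemmas~\ref{lem:q} and~\ref{lem:p}. I would handle the submersion claim by using the local sections of $p$ to choose representatives $p_2$ smoothly, which is what produces the local sections of the quotient map.
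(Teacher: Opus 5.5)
Your proposal is correct and follows the paper's proof: you lift composable pairs to triples with a common $q$-value by aligning the middle representatives using the unique holomorphic loop in their common $p$-fiber, then use the telescoping identity $\operatorname{pr}_{13}^*=\operatorname{pr}_{12}^*+\operatorname{pr}_{23}^*$ and descend along the local principal trivializations. One minor correction: independence of the lifted multiplication from the choice of representatives is purely set-theoretic, since $[\psi p_1,\psi p_3]=[p_1,p_3]$, so it does not need the $\Lhol G$-invariance of $\varpi$; that invariance is used only for the descent of the form itself.
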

\begin{proof}
On the fiberwise pair groupoid
$\mathcal P\times_q\mathcal P\rightrightarrows\mathcal P$,
\[
(x_1,x_2)\circ(x_2,x_3)=(x_1,x_3).
\]
On triples with common $q$-value,
\begin{align*}
m^*(\operatorname{pr}_1^*\varpi-\operatorname{pr}_2^*\varpi)
&=\operatorname{pr}_{13}^*
(\operatorname{pr}_1^*\varpi-\operatorname{pr}_2^*\varpi)\\
&=\operatorname{pr}_{12}^*
(\operatorname{pr}_1^*\varpi-\operatorname{pr}_2^*\varpi)
+\operatorname{pr}_{23}^*
(\operatorname{pr}_1^*\varpi-\operatorname{pr}_2^*\varpi).
\end{align*}
A composable pair of quotient arrows can locally be represented by a
triple of this form: use the unique holomorphic loop aligning the two
middle representatives in their common $p$-fiber. The local principal
trivializations make this alignment holomorphic. Thus the displayed
identity descends and proves multiplicativity on $\mathcal G_1$.
\end{proof}

\begin{Lemma}\label{lem:kernel}
The intersection
\[
\ker(\varpi)\cap\ker(Tq)
\]
is trivial at every point of $\mathcal P$.
\end{Lemma}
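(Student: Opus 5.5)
By Lemma~\ref{lem:q} and Lemma~\ref{lem:p-q-holomorphic-submersions}, $\ker(T_\gamma q)$ consists exactly of the infinitesimal right gauge directions $\xi_{\mathcal P}(\gamma)=\gamma\xi$ with $\xi\in\mathfrak g_\Sigma$. So the plan is to take such a vector, assume it lies in $\ker(\varpi)$, and show that $\xi=0$.

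First, apply Proposition~\ref{prop:varpi-identities}(iii). If $\iota_{\xi_{\mathcal P}}\varpi=0$, then $p^*\langle d\mu,\xi\rangle=0$ at $\gamma$. Since $p$ is a submersion, $T_\gamma p$ is surjective onto $\mathcal A$, so $\langle a,\xi\rangle_\Sigma=0$ for every $a\in\Omega^{0,1}(\Sigma,\mathfrak g)$. Concretely, using \eqref{eq:dp}, it is enough to test against vectors $\gamma\zeta$ with $\zeta\in\mathfrak g_\Sigma$. These give $T_\gamma p(\gamma\zeta)=D_A^{0,1}\zeta$ with $A=p(\gamma)$.

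Surjectivity alone already suffices. Choose $a=\phi\,d\bar z$ with $\phi\in C^\infty(\Sigma,\mathfrak g)$ arbitrary. Then
\[
\int_\Sigma\langle\phi,\xi\rangle\,dz\wedge d\bar z=0
\qquad\text{for all }\phi.
\]
Because $\langle\cdot,\cdot\rangle$ is nondegenerate and $dz\wedge d\bar z$ is a nonvanishing multiple of the area form, the fundamental lemma of the calculus of variations forces $\xi\equiv0$ pointwise. (If one prefers not to invoke surjectivity of $Tp$ on the full Fréchet space, take $\phi=\overline{\xi}$ with respect to a real form and a compatible Hermitian structure. This step is only formal in the complex-bilinear setting, so the surjectivity route is cleaner.) Hence $\xi_{\mathcal P}(\gamma)=0$, and the intersection is trivial.

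The one step needing care is identifying $\ker(T_\gamma q)$ with the gauge directions at the tangent level, not just at the level of orbits. From \eqref{eq:dq}, if $\delta\gamma=r\gamma$, then $\delta\gamma\in\ker Tq$ means $r(z+\tau)=\operatorname{Ad}_{h(z)}r(z)$. Setting $\xi=\operatorname{Ad}_{\gamma^{-1}}r$ gives $\xi(z+\tau)=\xi(z)$, so $\xi\in\mathfrak g_\Sigma$ and $\delta\gamma=\gamma\xi$. This is a direct check, so I expect no real obstacle. The only subtlety is the weak (injectivity) notion of nondegeneracy in the infinite-dimensional setting, and the pointwise argument above handles it.
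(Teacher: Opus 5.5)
Your proof is correct and follows the paper's argument essentially verbatim: gauge directions $\gamma\xi$ span $\ker T_\gamma q$; Proposition~\ref{prop:varpi-identities}(iii) turns $\iota_{\gamma\xi}\varpi=0$ into $\langle T_\gamma p(\cdot),\xi\rangle_\Sigma=0$; surjectivity of $T_\gamma p$ and nondegeneracy of the pairing then force $\xi=0$. Your direct check via \eqref{eq:dq} that $\xi=\operatorname{Ad}_{\gamma^{-1}}r$ is $\tau$-periodic is a welcome addition. One caution: your remark that it is enough to test against $\gamma\zeta$ with $\zeta\in\mathfrak g_\Sigma$ is false, since these vectors only give $D_A^{0,1}\xi=0$ (for example, any constant $\xi$ when $A=0$), so the proof genuinely needs surjectivity of $T_\gamma p$ on the non-gauge directions that move $q$; this is also why your $\phi=\overline{\xi}$ aside does not avoid that surjectivity.
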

\begin{proof}
Fix $\gamma\in\mathcal P$ and let
$V\in\ker\varpi_\gamma\cap\ker T_\gamma q$.
By the principal-fiber description, there is a unique
$\xi\in\mathfrak g_\Sigma$ such that
\[
V=\xi_{\mathcal P}(\gamma)=\gamma\xi.
\]
The contraction identity gives, for every $W\in T_\gamma\mathcal P$,
\[
0=\varpi_\gamma(V,W)
=-\left\langle T_\gamma p(W),\xi\right\rangle_\Sigma.
\]
Since $T_\gamma p$ is surjective,
\[
\langle\dot A,\xi\rangle_\Sigma=0
\quad\text{for every }\dot A\in T_{p(\gamma)}\mathcal A.
\]
Nondegeneracy of the integral pairing implies $\xi=0$, and hence $V=0$.
\end{proof}

Let
\[
\rho_{\omega_1}:A(\mathcal G_1)\longrightarrow T^*\mathcal A
\]
denote the IM map:
\[
\langle\rho_{\omega_1}(a),v\rangle
=\omega_1(a,T\epsilon(v)).
\]

\begin{Proposition}\label{prop:G1-symp}
The IM map $\rho_{\omega_1}$ is injective. The form $\omega_1$ is nondegenerate, so
\[
(\mathcal G_1,\omega_1)\rightrightarrows\mathcal A
\]
is a holomorphic symplectic groupoid.
\end{Proposition}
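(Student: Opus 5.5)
The plan is to work upstairs on $\mathcal P\times_q\mathcal P$ and use the structure identities of Proposition~\ref{prop:varpi-identities} together with Lemma~\ref{lem:kernel}.

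\textbf{Step 1: injectivity of the IM map.} I would first identify the Lie algebroid $A(\mathcal G_1)$ with the action algebroid $\mathfrak g_\Sigma\times\mathcal A$. At a unit $[\gamma,\gamma]$, a source-vertical vector can be represented upstairs by $(\gamma\xi,0)$, possibly modulo diagonal $\Lhol\mathfrak g$-directions. The reason is that $s=p\circ\operatorname{pr}_2$, and $q(\gamma\xi)=q(\gamma)$ holds infinitesimally because $\xi$ is $\tau$-periodic. The unit map lifts to the diagonal $\gamma\mapsto(\gamma,\gamma)$, so for $v=T_\gamma p(W)$ we get $T\epsilon(v)=[(W,W)]$. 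Then
\[
\omega_1(a,T\epsilon(v))=\varpi(\gamma\xi,W)-0=-\langle T_\gamma p(W),\xi\rangle_\Sigma
=-\langle v,\xi\rangle_\Sigma,
\]
by Proposition~\ref{prop:varpi-identities}(iii). Thus $\rho_{\omega_1}(\xi)=-\langle\cdot,\xi\rangle_\Sigma$, which equals $-d\mu$ applied to $\xi$ under the pairing. This map is injective by nondegeneracy of the integral pairing on $\Sigma$, exactly as in the proof of Lemma~\ref{lem:kernel}.

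\textbf{Step 2: nondegeneracy of $\omega_1$ everywhere.} Let $[V_1,V_2]$ be tangent at $[\gamma_1,\gamma_2]$ with $Tq(V_1)=Tq(V_2)$, and suppose it lies in the kernel of $\omega_1$. Then $\operatorname{pr}_1^*\varpi-\operatorname{pr}_2^*\varpi$ annihilates $(V_1,V_2)$ against every pair $(W_1,W_2)$ with $Tq(W_1)=Tq(W_2)$.

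\begin{itemize}
\item Take $W_2=0$ and $W_1\in\ker Tq$, so $W_1=\gamma_1\zeta$. Part (iii) gives $\langle T p(V_1),\zeta\rangle_\Sigma=0$ for all $\zeta$, hence $Tp(V_1)=0$.
\item By Lemma~\ref{lem:p}, $V_1$ is then vertical for $p$, i.e.\ $V_1=X_{\mathcal P}(\gamma_1)$ for some $X\in\Lhol\mathfrak g$. Symmetrically, $V_2=Y_{\mathcal P}(\gamma_2)$.
\item Subtracting the diagonal fundamental vector of $X$, which is zero in the quotient, I may assume $V_1=0$. Then $Tq(V_2)=0$ forces $V_2\in\ker Tq$, so $V_2=\gamma_2\xi$.
\item Now pair against $(0,W_2)$ with $W_2\in\ker Tq$, and against general pairs. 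This gives $\varpi(V_2,W_2)=0$ for all $W_2$ in $\ker Tq$ and, via (iii), for all $W_2$. Lemma~\ref{lem:kernel} then forces $V_2=0$.
\end{itemize}

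Hence $[V_1,V_2]=0$, so $\omega_1$ is weakly nondegenerate. Combined with the earlier lemmas on closedness and multiplicativity, this gives the holomorphic symplectic groupoid structure.

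\textbf{Expected obstacle.} The main difficulty is the bookkeeping in Step~2. There are two issues.

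\begin{itemize}
\item Justifying that $V_1=X_{\mathcal P}$ requires knowing that $\ker Tp$ consists exactly of the $\Lhol\mathfrak g$-directions. This is the infinitesimal version of Lemma~\ref{lem:p}: $D_A^{0,1}\xi=0$ means $\operatorname{Ad}_\gamma\xi$ is holomorphic.
\item The test vectors must satisfy the fiber-product constraint. Pairing $(V_1,V_2)$ against $(W,W)$ for arbitrary $W$ is always allowed, and so is pairing against $(W_1,0)$ with $W_1\in\ker Tq$. These choices suffice, because $\varpi(V_2,W)$ for a vertical $V_2=\gamma_2\xi$ only depends on $Tp(W)$, by part (iii).
\end{itemize}
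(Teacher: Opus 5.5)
Your proposal is correct. Step~1 and the first half of Step~2 match the paper exactly: testing against $(\gamma_1\zeta,0)$ and $(0,\gamma_2\zeta)$ gives $V_i=X_i\gamma_i$ with $X_i\in\Lhol\mathfrak g$. The two arguments diverge only in showing $X_1=X_2$.

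\emph{The paper's route.} It stays on the loop-group side. For $X=X_1-X_2$ it reads the fiber-product condition as the anchor equation $\operatorname{Ad}_{h^{-1}}\kappa(X)=X$. It then uses the contraction identity (ii) and surjectivity of $Tq$ to get $\alpha_h(X)=0$. The anchor equation rewrites this as $\alpha_h(X)(w)=B(X,\theta^L_h(w))$, and nondegeneracy of the contour pairing $B$ finishes. This is essentially the computation behind $\ker\rho_h\cap\ker\sigma_{2,h}=\{0\}$ in Lemma~\ref{lem:G2-quasi-symplectic}.

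\emph{Your route.} You use basicness to normalize $V_1=0$. The remaining vector then lies in $\ker Tq$, so it is a gauge direction $\gamma_2\xi$ with $\xi\in\mathfrak g_\Sigma$. You finish with Lemma~\ref{lem:kernel}, i.e.\ identity (iii), surjectivity of $Tp$, and nondegeneracy of $\langle\cdot,\cdot\rangle_\Sigma$.

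\emph{Comparison.} Your argument is shorter and never uses (ii) or $B$. You do not even need $Tp(V_2)=0$: $V_2-X\gamma_2\in\ker Tq$ already follows from $Tq(X\gamma_1)=Tq(X\gamma_2)$, since both equal $\rho_h(X)$. The paper's version has the merit of showing nondegeneracy of $\omega_1$ as the mirror image of the minimal degeneracy of $\omega_2$ across the bibundle.

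\emph{One phrasing fix.} Away from units $\gamma_1\neq\gamma_2$, so $(W,W)$ is not a tangent vector of the fiber product. What you need is that every $W_2\in T_{\gamma_2}\mathcal P$ is the second component of some admissible pair $(W_1,W_2)$. This holds because $T_{\gamma_1}q$ is surjective; alternatively, take $W_1=W_2u$ where $\gamma_1=\gamma_2u$. Then $\varpi(V_2,W_2)=0$ for all $W_2$ directly, and part (iii) enters only through Lemma~\ref{lem:kernel}.
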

\begin{proof}
The Lie algebroid has the quotient description
\[
A(\mathcal G_1)=T_q\mathcal P/\Lhol G.
\]
At a unit represented by $(\gamma,\gamma)$, an algebroid vector can be
represented by $(\xi_{\mathcal P}(\gamma),0)$. A unit tangent is
represented by $(W,W)$. If $v=T_\gamma p(W)$, then
\begin{align*}
\langle\rho_{\omega_1}([\xi_{\mathcal P}]),v\rangle
&=(\operatorname{pr}_1^*\varpi-\operatorname{pr}_2^*\varpi)
((\xi_{\mathcal P},0),(W,W))\\
&=\varpi(\xi_{\mathcal P},W)
=-\langle v,\xi\rangle_\Sigma.
\end{align*}
Consequently,
\[
\rho_{\omega_1}([\xi_{\mathcal P}])=-\langle d\mu,\xi\rangle.
\]
If this covector is zero, nondegeneracy of the pairing gives $\xi=0$.
Equivalently, its pullback is $\iota_{\xi_{\mathcal P}}\varpi=0$, and
Lemma~\ref{lem:kernel} applies. Thus the IM map is injective.
For the left gauge-action parameter $u=e^{t\zeta}$, the representative
has $\xi=-\zeta$; this accounts for the sign when using that algebroid
trivialization.

To prove weak nondegeneracy on the whole groupoid, let a vector in
$\ker\omega_1$ be represented by
\[
(V_1,V_2)\in T_{(\gamma_1,\gamma_2)}(\mathcal P\times_q\mathcal P).
\]
Testing against $(\xi_{\mathcal P}(\gamma_1),0)$ and
$(0,\xi_{\mathcal P}(\gamma_2))$, respectively, gives
\[
T_{\gamma_1}p(V_1)=0,\qquad T_{\gamma_2}p(V_2)=0.
\]
Hence there are $X_1,X_2\in\Lhol\mathfrak g$ with
\[
V_1=X_1\gamma_1,\qquad V_2=X_2\gamma_2.
\]
Put $h=q(\gamma_1)=q(\gamma_2)$ and $X=X_1-X_2$.
The fiber-product tangent condition implies
\[
X(z+\tau)h(z)-h(z)X(z)=0.
\]
Furthermore, for any tangent pair $(W_1,W_2)$ with common $Tq$-image $w$,
\begin{align*}
0&=\varpi(V_1,W_1)-\varpi(V_2,W_2)\\
 &=-\alpha_h(X_1)(w)+\alpha_h(X_2)(w)
 =-\alpha_h(X)(w).
\end{align*}
Every $w\in T_h\Lhol G$ occurs, since $q$ is a submersion. Thus
$\alpha_h(X)=0$. The anchor equation gives
$\operatorname{Ad}_{h(z)^{-1}}X(z+\tau)=X(z)$, so for any
$Y=\theta_h^L(w)$,
\[
\alpha_h(X)(w)=\int_{\mathbb R/\mathbb Z}\langle X(z),Y(z)\rangle\,dz.
\]
The contour pairing is nondegenerate; hence $X=0$ and $X_1=X_2$.
Therefore $(V_1,V_2)$ is tangent to the diagonal $\Lhol G$-orbit and
represents zero in the quotient. This proves weak nondegeneracy.
Closedness, holomorphicity, and multiplicativity were established above.
\end{proof}

\subsection{The quasi-symplectic groupoid $\mathcal G_2$}\label{sec:G2}
Consider the action groupoid
\[
\mathcal G_2:=\Lhol G\ltimes\Lhol G\rightrightarrows\Lhol G
\]
with
\[
s(\psi,g)=g,\qquad t(\psi,g)=\kappa(\psi)g\psi^{-1}.
\]
Let $\operatorname{pr}_1,\operatorname{pr}_2:\mathcal G_2\to\Lhol G$
be the projections onto the action and base factors, respectively.
Define
\begin{equation}\label{eq:omega2-def}
\omega_2=-\frac12\int_{\mathbb R/\mathbb Z}
\left(
\left\langle\operatorname{Ad}_{g(z)^{-1}}\Lambda_{z+\tau},\Lambda_z\right\rangle
-\left\langle\Lambda_{z+\tau},\Theta_z^R\right\rangle
-\left\langle\Lambda_z,\Theta_z^L\right\rangle
\right)\,dz,
\end{equation}
where
\[
\Lambda_z:=(\operatorname{ev}_z\circ\operatorname{pr}_1)^*\theta^L,
\qquad
\Theta_z^L:=(\operatorname{ev}_z\circ\operatorname{pr}_2)^*\theta^L,
\qquad
\Theta_z^R:=(\operatorname{ev}_z\circ\operatorname{pr}_2)^*\theta^R.
\]

\begin{Lemma}\label{lem:G2-quasi-symplectic}
The groupoid
\[
(\mathcal G_2=\Lhol G\ltimes\Lhol G,\omega_2)
\rightrightarrows(\Lhol G,\eta)
\]
is holomorphic quasi-symplectic. More precisely:
\begin{enumerate}
\item $\omega_2$ is multiplicative:
\[
m^*\omega_2=\operatorname{pr}_1^*\omega_2+
\operatorname{pr}_2^*\omega_2;
\]
\item $\omega_2$ is relatively closed:
\[
d\omega_2=s^*\eta-t^*\eta;
\]
\item its IM map is $\sigma_2=\alpha$, and
$\ker\rho_g\cap\ker\sigma_{2,g}=\{0\}$;
\item the image of $(\rho_g,\sigma_{2,g})$ is maximally isotropic
in the regular tangent-cotangent pairing.
\end{enumerate}
\end{Lemma}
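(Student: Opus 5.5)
The plan is to reduce everything to pointwise finite-dimensional identities on $G$, integrated over $\mathbb R/\mathbb Z$. The model is the quasi-symplectic groupoid $G\ltimes G$ for the twisted conjugation $(a,b,g)\mapsto agb^{-1}$. Its arrows are $(a,b,g)$ with $a=\psi(z+\tau)$, $b=\psi(z)$, $g=g(z)$, and it carries the Alekseev--Meinrenken--Woodward type $2$-form
\[
\omega_{\mathrm{fin}}=-\tfrac12\bigl(\langle \operatorname{Ad}_{g^{-1}}\theta^L_a,\theta^L_b\rangle-\langle\theta^L_a,\theta^R_g\rangle-\langle\theta^L_b,\theta^L_g\rangle\bigr).
\]
Then $\omega_2=\int_{\mathbb R/\mathbb Z}\operatorname{ev}_z^*\omega_{\mathrm{fin}}\,dz$, pulled back along $(\psi,g)\mapsto(\psi(z+\tau),\psi(z),g(z))$. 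Since $\eta$ is also an integral of $\operatorname{ev}_z^*\eta_G$, items (i) and (ii) follow from the appendix identities. These are $m^*\omega_{\mathrm{fin}}=\mathrm{pr}_1^*\omega_{\mathrm{fin}}+\mathrm{pr}_2^*\omega_{\mathrm{fin}}$ on composable pairs $((a',b',agb^{-1}),(a,b,g))$ and $d\omega_{\mathrm{fin}}=s^*\eta_G-t^*\eta_G$. I will prove both by the Polyakov--Wiegmann identity, as in the proof of Proposition~\ref{prop:varpi-identities}(i), applied twice to $t(a,b,g)=agb^{-1}$, together with the Maurer--Cartan equations. Composition in $\mathcal G_2$ is $(\psi',\cdot)\circ(\psi,g)=(\psi'\psi,g)$, so evaluation at $z$ and $z+\tau$ intertwines the groupoid structures, and multiplicativity descends pointwise.

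For (iii), I compute the IM form at a unit $(e,g)$. A Lie algebroid element is $X\in\Lhol\mathfrak g$, represented by the tangent vector $(X,0)$. A unit tangent vector is $(0,w)$, and on it $\Lambda$ vanishes. Only the terms linear in $\Lambda$ survive, which gives
\[
\sigma_2(X)(w)=\tfrac12\int\bigl(\langle X(z+\tau),\theta^R(w)\rangle+\langle X(z),\theta^L(w)\rangle\bigr)dz=\alpha(X)(w).
\]
The anchor is $\rho_g(X)=X(z+\tau)g-gX$, read off from \eqref{eq:dq}. For the kernel condition, suppose $\rho_g(X)=0$. Then $\operatorname{Ad}_{g^{-1}}X(z+\tau)=X(z)$, and exactly as in the proof of Proposition~\ref{prop:G1-symp} we get $\alpha_g(X)(w)=\int\langle X,\theta^L(w)\rangle dz$. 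Nondegeneracy of the contour pairing $B$ then forces $X=0$.

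Item (iv) is the main obstacle, because of infinite dimensionality. Isotropy is the easy half. I will check that
\[
\sigma(X)(\rho(Y))+\sigma(Y)(\rho(X))=2\,\cdot\tfrac12\cdot\bigl[B(\kappa X,\kappa Y)-B(X,Y)\bigr]\cdot 0
\]
reduces, after expanding $\theta^{L}(\rho(Y))=\operatorname{Ad}_{g^{-1}}\kappa Y-Y$ and $\theta^R(\rho(Y))=\kappa Y-\operatorname{Ad}_gY$, to $B(\kappa X,\kappa Y)-B(X,Y)$ with cross terms cancelling. This vanishes by \eqref{eq:translation-pairing}.

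For maximality in the regular pairing, I identify $T_g\Lhol G\oplus T^*_{g,\mathrm{reg}}$ with $\Lhol\mathfrak g\oplus\Lhol\mathfrak g$ via $\theta^L$ and $B$. Take $(v,\beta)$ orthogonal to the image. I then have to produce $X$ with $(\rho(X),\alpha(X))=(v,\beta)$. The plan mirrors the finite-dimensional Cartan--Dirac argument. The map $X\mapsto(\operatorname{Ad}_{g^{-1}}\kappa X-X,\ \tfrac12(\operatorname{Ad}_{g^{-1}}\kappa X+X))$ is the composition of the injection $X\mapsto(\operatorname{Ad}_{g^{-1}}\kappa X, X)$ with a fixed invertible linear change of coordinates. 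Orthogonality of $(v,\beta)$ translates into orthogonality to the graph of $\operatorname{Ad}_{g^{-1}}\kappa$. That graph is Lagrangian for $B\oplus(-B)$, because $\kappa$ and $\operatorname{Ad}_{g^{-1}}$ preserve $B$ and are invertible on $\Lhol\mathfrak g$. So the orthogonal complement of the graph, in the regular sense, is the graph itself, and this yields the required $X$. The delicate point is justifying that the $B$-orthogonal of a graph of a $B$-isometric isomorphism is that graph, using only weak nondegeneracy of $B$. I expect this to work because the isometry is an actual isomorphism, so no closure or density issue arises.
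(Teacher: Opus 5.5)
Your treatment of (iii) and (iv) is sound and matches the paper. That covers the IM computation at the unit, the kernel argument via $B$, and the graph argument for $T_g=\operatorname{Ad}_{g^{-1}}\circ\kappa$, which needs only weak nondegeneracy of $B$ and invertibility of $T_g$.

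The gap is in (i) and (ii): the finite-dimensional identities you plan to integrate are false. In the two-sided action groupoid $(G\times G)\ltimes G$, $(a,b)\cdot g=agb^{-1}$, the entries $a$ and $b$ are independent. This groupoid has dimension $3\dim G$, so it is not quasi-symplectic over $G$. The relation between $a=\psi(z+\tau)$ and $b=\psi(z)$ is not a pointwise relation in $G$. Applying Polyakov--Wiegmann twice gives $t^*\eta_G\equiv\operatorname{pr}_a^*\eta_G+s^*\eta_G-\operatorname{pr}_b^*\eta_G$ modulo exact forms, and the computation actually yields
\[
d\omega_{\mathrm{fin}}=s^*\eta_G-t^*\eta_G+\operatorname{pr}_a^*\eta_G-\operatorname{pr}_b^*\eta_G .
\]
The extra terms are not even exact when $[\eta_G]\neq0$, so no $2$-form on $G^3$ satisfies the identity you claim. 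Similarly, on composable pairs $((a_1,b_1,a_2gb_2^{-1}),(a_2,b_2,g))$ there is a nonzero defect
\[
m^*\omega_{\mathrm{fin}}-\operatorname{pr}_1^*\omega_{\mathrm{fin}}-\operatorname{pr}_2^*\omega_{\mathrm{fin}}
=-\tfrac12\bigl(\langle\theta^L_{a_1},\theta^R_{a_2}\rangle-\langle\theta^L_{b_1},\theta^R_{b_2}\rangle\bigr).
\]
Your own computation in (iv) already rules out pointwise multiplicativity. A multiplicative form has $\sigma(X)(\rho(Y))=-\sigma(Y)(\rho(X))$, because $\ker Ts$ and $\ker Tt$ are $\omega$-orthogonal at units. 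Before integration, however, your symmetrized expression is $\langle X(z+\tau),Y(z+\tau)\rangle-\langle X(z),Y(z)\rangle$, which is not zero.

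The missing idea is that these defects cancel only after pulling back along $(\psi,g)\mapsto(\psi(z+\tau),\psi(z),g(z))$ and integrating over $\mathbb R/\mathbb Z$. In each defect, the two terms are the values at $z+\tau$ and at $z$ of the same expression in $\psi$, respectively in $(\psi_1,\psi_2)$. Evaluated on tangent vectors, which are holomorphic loops, their coefficients are holomorphic and $1$-periodic in $z$. Cauchy's theorem then lets you shift the contour by $\tau$, so the two integrals coincide. This is the same mechanism as \eqref{eq:translation-pairing} that you invoke in (iv). Holomorphy of the action loops is therefore already essential for (i) and (ii). Your step ``multiplicativity descends pointwise'' must be replaced by ``the pointwise defect integrates to zero,'' and likewise for relative closedness.
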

\begin{proof}
Write
\[
\omega_2=\int_{\mathbb R/\mathbb Z}\Omega_z\,dz,
\]
where
\[
\Omega_z=-\frac12\left(
\left\langle\operatorname{Ad}_{g(z)^{-1}}\Lambda_{z+\tau},\Lambda_z\right\rangle
-\left\langle\Lambda_{z+\tau},\Theta_z^R\right\rangle
-\left\langle\Lambda_z,\Theta_z^L\right\rangle
\right).
\]
For fixed $z$, put
\[
a=\psi(z),\qquad b=\psi(z+\tau),\qquad h=g(z).
\]
Then $\Omega_z$ is the pullback of
\[
\Omega=-\frac12\left(
\langle\operatorname{Ad}_{h^{-1}}\phi_2,\phi_1\rangle
-\langle\phi_2,\Theta^R\rangle
-\langle\phi_1,\Theta^L\rangle
\right),
\]
where
\[
\phi_1=a^{-1}da,\qquad\phi_2=b^{-1}db,
\qquad\Theta^L=h^{-1}dh,\qquad\Theta^R=dh\,h^{-1}.
\]
The corresponding finite-dimensional action is
\[
(a,b)\cdot h=bha^{-1},\qquad
s(a,b,h)=h,\qquad t(a,b,h)=bha^{-1}.
\]
For this two-sided action, the finite-dimensional identities contain
additional terms. Appendix~\ref{app:cartan-dirac} gives
\begin{equation}\label{eq:finite-relative}
d\Omega=s^*\eta_G-t^*\eta_G+\operatorname{pr}_b^*\eta_G
-\operatorname{pr}_a^*\eta_G,
\end{equation}
as well as the multiplicativity defect. The additional terms cancel
only after imposing the translation relation and integrating.

\smallskip
\noindent\emph{Relative closedness.}
Pulling back \eqref{eq:finite-relative} gives
\begin{align*}
d\Omega_z
={}&(\operatorname{ev}_z\circ s)^*\eta_G
-(\operatorname{ev}_z\circ t)^*\eta_G\\
&+(\operatorname{ev}_{z+\tau}\circ\operatorname{pr}_1)^*\eta_G
-(\operatorname{ev}_z\circ\operatorname{pr}_1)^*\eta_G.
\end{align*}
The last two terms have equal contour integrals: their coefficients
are holomorphic and $1$-periodic in $z$, so the contours can be translated
by $\tau$. Therefore
\begin{align*}
d\omega_2
&=\int_{\mathbb R/\mathbb Z}d\Omega_z\,dz\\
&=s^*\int_{\mathbb R/\mathbb Z}\operatorname{ev}_z^*\eta_G\,dz
-t^*\int_{\mathbb R/\mathbb Z}\operatorname{ev}_z^*\eta_G\,dz\\
&=s^*\eta-t^*\eta.
\end{align*}

\smallskip
\noindent\emph{Multiplicativity.}
A composable pair is of the form
\[
(\psi_1,\kappa(\psi_2)g\psi_2^{-1}),\qquad(\psi_2,g),
\]
and its product is $(\psi_1\psi_2,g)$. Indeed,
\begin{align*}
\psi_1\cdot(\psi_2\cdot g)
&=\kappa(\psi_1)\bigl(\kappa(\psi_2)g\psi_2^{-1}\bigr)\psi_1^{-1}\\
&=\kappa(\psi_1\psi_2)g(\psi_1\psi_2)^{-1}.
\end{align*}
Let $\lambda_{i,z}^{L,R}$ denote the Maurer-Cartan forms of the
$i$th action variable evaluated at $z$. The finite-dimensional defect
computed in the appendix gives
\begin{align*}
&m^*\Omega_z-\operatorname{pr}_1^*\Omega_z-
\operatorname{pr}_2^*\Omega_z\\
&\qquad=-\frac12\left(
\langle\lambda_{1,z+\tau}^L,\lambda_{2,z+\tau}^R\rangle
-\langle\lambda_{1,z}^L,\lambda_{2,z}^R\rangle\right).
\end{align*}
Both coefficient forms are holomorphic in $z$, and one is the
$\tau$-translate of the other. Their contour integrals agree. Hence
\[
m^*\omega_2=\operatorname{pr}_1^*\omega_2+
\operatorname{pr}_2^*\omega_2.
\]

\smallskip
\noindent\emph{Minimal degeneracy.}
The Lie algebroid is the action algebroid
\[
A(\mathcal G_2)=\Lhol G\times\Lhol\mathfrak g,
\]
with anchor
\[
\rho_g(X)(z)=X(z+\tau)g(z)-g(z)X(z).
\]
After left trivialization,
\[
\theta_g^L(\rho_g(X))(z)
=\operatorname{Ad}_{g(z)^{-1}}X(z+\tau)-X(z).
\]
Evaluating \eqref{eq:omega2-def} at the unit $(1,g)$ on
$((X,0),(0,v))$ gives
\begin{align*}
\sigma_{2,g}(X)(v)
&=\frac12\int_{\mathbb R/\mathbb Z}
\left(\langle X(z),\theta_g^L(v)(z)\rangle
+\langle X(z+\tau),\theta_g^R(v)(z)\rangle\right)\,dz\\
&=\alpha_g(X)(v).
\end{align*}
Suppose $\rho_g(X)=0$ and $\sigma_{2,g}(X)=0$. The anchor equation implies
\[
\operatorname{Ad}_{g(z)^{-1}}X(z+\tau)=X(z).
\]
For $v\in T_g\Lhol G$, put $Y(z)=\theta_g^L(v)(z)$. Then
$\theta_g^R(v)(z)=\operatorname{Ad}_{g(z)}Y(z)$, and
\begin{align*}
\alpha_g(X)(v)
&=\frac12\int_{\mathbb R/\mathbb Z}
\left(\langle X(z),Y(z)\rangle
+\langle X(z+\tau),\operatorname{Ad}_{g(z)}Y(z)\rangle\right)\,dz\\
&=\frac12\int_{\mathbb R/\mathbb Z}
\left(\langle X(z),Y(z)\rangle
+\langle\operatorname{Ad}_{g(z)^{-1}}X(z+\tau),Y(z)\rangle\right)\,dz\\
&=\int_{\mathbb R/\mathbb Z}\langle X(z),Y(z)\rangle\,dz.
\end{align*}
Thus $B(X,Y)=0$ for every holomorphic loop $Y$, and nondegeneracy of
$B$ implies $X=0$. This proves
\[
\ker\rho_g\cap\ker\sigma_{2,g}=\{0\}.
\]

For maximal isotropy, put
\[
T_gX=\operatorname{Ad}_{g^{-1}}\kappa(X).
\]
This operator preserves $B$ by \eqref{eq:translation-pairing} and
pointwise $\operatorname{Ad}$-invariance. In left trivialization,
\[
\rho_g=T_g-I,\qquad\sigma_{2,g}=\tfrac12(I+T_g),
\]
where the second equality uses the regular-dual identification.
The image is isotropic because
\begin{align*}
&B((T_g-I)X,\tfrac12(I+T_g)Y)
+B((T_g-I)Y,\tfrac12(I+T_g)X)=0.
\end{align*}
If $(v,\beta)$ is orthogonal to the image, nondegeneracy of $B$ gives
\[
(T_g^{-1}-I)\beta+\tfrac12(I+T_g^{-1})v=0.
\]
Taking $X=\beta-v/2$ yields $T_gX=\beta+v/2$, and therefore
\[
v=(T_g-I)X,\qquad\beta=\tfrac12(I+T_g)X.
\]
Thus the image equals its orthogonal complement within the regular
tangent-cotangent space.

\end{proof}
The pair $(\mathcal G_2,\omega_2)$ is a holomorphic analogue of the
Alekseev-Malkin-Meinrenken construction \cite{alekseev1998lie}, with
background Cartan $3$-form $\eta$. Its infinitesimal geometry is the
twisted Cartan-Dirac geometry interpreted through the regular pairing.

\section{Hamiltonian Actions on the Correspondence Space}
Consider the left action of $\mathcal G_1$ and the right action of
$\mathcal G_2$ on $\mathcal P$. The left action is induced by the
fiberwise pair-groupoid action:
\[
[(p_1,p_2)]\cdot p_2=p_1.
\]
More generally, if $p(\gamma)=p(p_2)$, let $\psi\in\Lhol G$ be the unique
loop with $\gamma=\psi p_2$, and set
\[
[(p_1,p_2)]\cdot\gamma=\psi p_1.
\]
This definition does not depend on the representative of the quotient
arrow. The right action is
\[
\gamma\cdot(\psi,g)=\psi^{-1}\gamma,
\qquad q(\gamma)=t(\psi,g).
\]
The output has $q$-value $s(\psi,g)=g$.

\begin{Proposition}\label{prop:action-identities}
For the groupoid
\[
\mathcal G_2=\Lhol G\ltimes\Lhol G\rightrightarrows\Lhol G,
\qquad \psi\cdot g=\kappa(\psi)g\psi^{-1},
\]
the left and right actions on $\mathcal P$ satisfy
\[
a_L^*\varpi=\operatorname{pr}_{\mathcal G_1}^*\omega_1+
\operatorname{pr}_{\mathcal P}^*\varpi
\]
and
\[
a_R^*\varpi=\operatorname{pr}_{\mathcal P}^*\varpi+
\operatorname{pr}_{\mathcal G_2}^*\omega_2.
\]
\end{Proposition}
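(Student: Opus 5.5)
The two identities have different characters, so I would prove them separately: the left one by descending an identity from the fiber product, the right one by a direct computation.

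\emph{Left action.} I would lift everything to the fiberwise pair groupoid. Consider the space of triples
\[
\widetilde{Z}=\{(p_1,p_2,\gamma)\in\mathcal P^3\mid q(p_1)=q(p_2),\ p(p_2)=p(\gamma)\}.
\]
Write $\gamma=\psi p_2$ with $\psi\in\Lhol G$ determined by Lemma~\ref{lem:p}, and choose it holomorphically in local principal trivializations. The lifted action map is $(p_1,p_2,\gamma)\mapsto\psi p_1$.

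First, $\varpi$ is invariant under fixed holomorphic loops; this was shown in the proof that $\operatorname{pr}_1^*\varpi-\operatorname{pr}_2^*\varpi$ is basic. Here, however, $\psi$ varies with the point, so I would use the derivative of left multiplication by a varying $\psi$. It adds the fundamental vector of $\theta^L(\delta\psi)$, and by Proposition~\ref{prop:varpi-identities}(ii) its contraction with $\varpi$ is $-q^*\alpha$. This gives the formula
\[
(\psi\gamma)^*\varpi=\varpi_\gamma+\text{(terms in }\psi^*\theta^L\text{ and }q^*\alpha),
\]
which is the groupoid analogue of the Alekseev--Meinrenken identity. Applying it to both $\psi p_1$ and $\psi p_2=\gamma$, the extra terms should coincide, because $q(p_1)=q(p_2)$ and $q$ is equivariant (Lemma~\ref{lem:q}). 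They therefore cancel in
\[
\varpi_{\psi p_1}-\varpi_{\psi p_2}=\varpi_{p_1}-\varpi_{p_2}.
\]
Rearranging gives
\[
a_L^*\varpi=(\operatorname{pr}_1^*\varpi-\operatorname{pr}_2^*\varpi)+\varpi_\gamma,
\]
and the right-hand side descends to $\operatorname{pr}_{\mathcal G_1}^*\omega_1+\operatorname{pr}_{\mathcal P}^*\varpi$. A shorter route to the same conclusion would realize $a_L$ as the pair-groupoid composition $(p_1,p_2)\circ(p_2,\gamma')$ after realigning representatives, and then reuse the multiplicativity identity in the proof that $\omega_1$ is multiplicative.

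\emph{Right action.} Here I would compute directly on the space of pairs $(\gamma,(\psi,g))$ with $q(\gamma)=\kappa(\psi)g\psi^{-1}$, using the action map $\gamma\mapsto\psi^{-1}\gamma$.
\begin{enumerate}
\item In the bulk term, write $\theta^R$ for $\psi^{-1}\gamma$ as
\[
\operatorname{Ad}_{\psi^{-1}}\bigl(\theta^R_\gamma-\theta^R_\psi\bigr).
\]
Since $\psi$ is holomorphic, $\partial_{\bar z}$ passes through $\operatorname{Ad}_{\psi^{-1}}$ and kills $\theta^R_\psi$. The bulk term therefore becomes the bulk term of $\varpi_\gamma$ plus the cross term
\[
\frac12\int_{\Sigma_1}\bigl\langle\theta^R_\psi,\partial_{\bar z}\theta^R_\gamma\bigr\rangle\,dz\wedge d\bar z,
\]
with the sign to be fixed, and no $\psi$--$\psi$ term.
\item Apply Stokes' theorem \eqref{eq:stokes-convention} to the cross term, exactly as in the proof of Proposition~\ref{prop:varpi-identities}(ii). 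This turns it into contour integrals of $\langle\theta^R_\psi,\theta^R_\gamma\rangle$ at $z$ and at $z+\tau$.
\item Expand the boundary term of $\psi^{-1}\gamma$ using
\[
\theta^L_{\psi^{-1}\gamma}=\theta^L_\gamma-\operatorname{Ad}_{\gamma^{-1}}\theta^L_\psi .
\]
\item Re-express the forms in $\gamma$ through $g$ and $\psi$ via the constraint $\gamma(z+\tau)\gamma(z)^{-1}=\psi(z+\tau)g(z)\psi(z)^{-1}$, using the pullback formulas for $q^*\theta^{L,R}$ from the proof of part (ii).
\end{enumerate}

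The main obstacle is the last step: matching the result term by term with the three summands of $\omega_2$ in \eqref{eq:omega2-def}. In particular, the $\psi$--$\psi$ piece $\langle\operatorname{Ad}_{g^{-1}}\Lambda_{z+\tau},\Lambda_z\rangle$ must come out of the expanded boundary term, and any leftover terms of the form $\langle\theta_{\psi,z+\tau},\theta_{\psi,z+\tau}\rangle$ against translated copies must be removed. For those I would use the contour-translation trick from the proof of Lemma~\ref{lem:G2-quasi-symplectic}: holomorphic $1$-periodic coefficients have equal integrals over $\mathbb R/\mathbb Z$ and $\mathbb R/\mathbb Z+\tau$.

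As a consistency check before the full expansion, I would verify the right identity on generators. By Proposition~\ref{prop:varpi-identities}(ii) and the IM map $\sigma_2=\alpha$ from Lemma~\ref{lem:G2-quasi-symplectic}, both sides have the same contractions at units. Combined with $d\varpi=q^*\eta$ and $d\omega_2=s^*\eta-t^*\eta$, this confirms the signs before the full computation.
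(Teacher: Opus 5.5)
Your proposal is correct, but it is organized differently from the paper's proof. The paper proves one identity for the loop action $\ell(\psi,\delta)=\psi\delta$,
\[
\ell^*\varpi=\operatorname{pr}_{\mathcal P}^*\varpi-F^*\omega_2,\qquad F(\psi,\delta)=(\psi,q(\delta)),
\]
and uses it twice. For the right action it reparametrizes $\mathcal P\times_{q,t}\mathcal G_2$ by $(\psi,\delta)\mapsto(\psi\delta,(\psi,q(\delta)))$, so that $a_R$ becomes projection to $\delta$. For the left-action descent it applies the identity to $p_1$ and $p_2$ and subtracts. Because the \emph{output} $\delta$ is the base variable, the arrow's base point is literally $q(\delta)$. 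The forms $\Theta^{L,R}_z$ therefore pull back to $Q_z^*\theta^{L,R}$, and $\omega_2$ appears verbatim.

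Your left-action argument is softer. Invariance under fixed loops, together with contraction identity (ii), shows that the correction from a varying $\psi$ depends only on $\theta^L(\delta\psi)$, $q$ and $Tq$. It therefore cancels on the fiber product. This is valid, and it has the advantage of not needing the explicit form of $\omega_2$.

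For the right action you take the \emph{input} $\gamma$ as base variable. That choice is what creates the matching problem you flag. The computation does close, and it closes pointwise in $z$, so no contour translation is needed.
\begin{itemize}
\item After Stokes, the bulk cross terms and the boundary cross terms combine into $\langle\theta^R_{\psi,z},q^*\operatorname{ev}_z^*\theta^L\rangle+\langle\theta^R_{\psi,z+\tau},q^*\operatorname{ev}_z^*\theta^R\rangle$, with $q=q(\gamma)$.
\item The remaining boundary term is $\langle\operatorname{Ad}_{q(\gamma)(z)^{-1}}\theta^R_{\psi,z+\tau},\theta^R_{\psi,z}\rangle$.
\item With the overall factor $\tfrac12$, the sum equals $\Omega_z(\psi,g)$ once $q(\gamma)=\kappa(\psi)g\psi^{-1}$ is substituted into $\Theta^{L,R}_z$.
\end{itemize}

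Be careful with your expectation that the $\psi$--$\psi$ piece comes out of the boundary term. The boundary produces $\langle\operatorname{Ad}_{g^{-1}}\Lambda_{z+\tau},\Lambda_z\rangle$ with coefficient $+\tfrac12$, which is opposite to the explicit $-\tfrac12$ in $\omega_2$. A naive term-by-term match would fail. The discrepancy is resolved because the two $\Theta$-terms of $\omega_2$ each contribute another $+\tfrac12$ of this term after the substitution.

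Also correct the formula in step~3: $\theta^L_{\psi^{-1}\gamma}=\theta^L_\gamma-\operatorname{Ad}_{\gamma^{-1}}\theta^R_\psi$, with $\theta^R_\psi$ rather than $\theta^L_\psi$.
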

\begin{proof}
\emph{Left action identity.}
On the fiberwise pair groupoid, the source and target are
\[
s(p_1,p_2)=p_2,\qquad t(p_1,p_2)=p_1,
\]
and the action is $a_L((p_1,p_2),p_2)=p_1$. Thus, on this lifted domain,
\begin{align*}
a_L^*\varpi
&=\operatorname{pr}_1^*\varpi\\
&=(\operatorname{pr}_1^*\varpi-\operatorname{pr}_2^*\varpi)
+\operatorname{pr}_2^*\varpi\\
&=\operatorname{pr}_{\mathcal G_1}^*\omega_1+
\operatorname{pr}_{\mathcal P}^*\varpi.
\end{align*}
Descent to the full action domain follows from the loop-action identity
below and the equality $q(p_1)=q(p_2)$.

\medskip
\emph{Right action identity.}
Let
\[
a_R:\mathcal P\times_{q,t}\mathcal G_2\longrightarrow\mathcal P,
\qquad a_R(\gamma,(\psi,g))=\psi^{-1}\gamma.
\]
First consider the left loop action
\[
\ell:\Lhol G\times\mathcal P\longrightarrow\mathcal P,
\qquad \ell(\psi,\delta)=\psi\delta.
\]
Set
\[
F(\psi,\delta)=(\psi,q(\delta))\in\mathcal G_2.
\]
Then
\[
q\circ\ell=t\circ F,
\]
since
\begin{align*}
q(\psi\delta)(z)
&=\psi(z+\tau)\delta(z+\tau)
\bigl(\psi(z)\delta(z)\bigr)^{-1}\\
&=\kappa(\psi)(z)q(\delta)(z)\psi(z)^{-1}.
\end{align*}
We will show
\begin{equation}\label{eq:left-loop-action}
\ell^*\varpi=\operatorname{pr}_{\mathcal P}^*\varpi-F^*\omega_2.
\end{equation}
The right-action identity will then follow by writing
$\gamma=\psi\delta$.

Write $\varpi=\varpi_{\mathrm{bulk}}+\varpi_{\partial}$, where
\[
\varpi_{\mathrm{bulk}}=-\frac12\int_{\Sigma_1}
\left\langle\operatorname{ev}_z^*\theta^R,
\frac{\partial}{\partial\bar z}(\operatorname{ev}_z^*\theta^R)
\right\rangle\,dz\wedge d\bar z
\]
and
\[
\varpi_{\partial}=\frac12\int_{\mathbb R/\mathbb Z}
\left\langle\operatorname{ev}_{z+\tau}^*\theta^L,
\operatorname{ev}_z^*\theta^L\right\rangle\,dz.
\]
On $\Lhol G\times\mathcal P$, define
\[
\Gamma_z(\psi,\delta)=\delta(z),\qquad
\Psi_z(\psi,\delta)=\psi(z).
\]
Then $\operatorname{ev}_z\circ\ell=m\circ(\Psi_z,\Gamma_z)$, where
$m:G\times G\to G$ is multiplication.

\medskip
\emph{The bulk term.}
The right Maurer-Cartan form satisfies
\[
m^*\theta^R=\operatorname{pr}_1^*\theta^R+
\operatorname{Ad}_{\operatorname{pr}_1}\operatorname{pr}_2^*\theta^R.
\]
Consequently,
\[
(\operatorname{ev}_z\circ\ell)^*\theta^R
=\Psi_z^*\theta^R+\operatorname{Ad}_{\Psi_z}\Gamma_z^*\theta^R.
\]
Since $\psi$ is holomorphic,
\[
\frac{\partial}{\partial\bar z}(\Psi_z^*\theta^R)=0,
\qquad
\frac{\partial}{\partial\bar z}(\Psi_z^*\theta^L)=0.
\]
Hence
\[
\frac{\partial}{\partial\bar z}
((\operatorname{ev}_z\circ\ell)^*\theta^R)
=\operatorname{Ad}_{\Psi_z}\frac{\partial}{\partial\bar z}
(\Gamma_z^*\theta^R).
\]
Using invariance and
$\operatorname{Ad}_{\Psi_z^{-1}}\Psi_z^*\theta^R=\Psi_z^*\theta^L$,
we obtain
\begin{align*}
&\left\langle(\operatorname{ev}_z\circ\ell)^*\theta^R,
\frac{\partial}{\partial\bar z}
((\operatorname{ev}_z\circ\ell)^*\theta^R)\right\rangle\\
&\qquad=\left\langle\Gamma_z^*\theta^R,
\frac{\partial}{\partial\bar z}(\Gamma_z^*\theta^R)\right\rangle
+\frac{\partial}{\partial\bar z}
\left\langle\Psi_z^*\theta^L,\Gamma_z^*\theta^R\right\rangle.
\end{align*}
Stokes' theorem in the form \eqref{eq:stokes-convention} gives
\begin{align}
\ell^*\varpi_{\mathrm{bulk}}
&=\operatorname{pr}_{\mathcal P}^*\varpi_{\mathrm{bulk}}
-\frac12\int_{\mathbb R/\mathbb Z}
\left\langle\Psi_{z+\tau}^*\theta^L,
\Gamma_{z+\tau}^*\theta^R\right\rangle\,dz\notag\\
&\qquad+\frac12\int_{\mathbb R/\mathbb Z}
\left\langle\Psi_z^*\theta^L,\Gamma_z^*\theta^R\right\rangle\,dz.
\label{eq:bulk-action-correction}
\end{align}

\medskip
\emph{The boundary term.}
Set
\[
Q_z=\Gamma_{z+\tau}\Gamma_z^{-1},\qquad
\lambda_z=\Psi_z^*\theta^L,\qquad
\alpha_z=\Gamma_z^*\theta^L,\qquad
\rho_z=\Gamma_z^*\theta^R.
\]
The left Maurer-Cartan multiplication formula gives
\[
(\operatorname{ev}_z\circ\ell)^*\theta^L
=\operatorname{Ad}_{\Gamma_z^{-1}}\lambda_z+\alpha_z.
\]
Substitution into the boundary term yields
\begin{align*}
\ell^*\varpi_{\partial}
&=\operatorname{pr}_{\mathcal P}^*\varpi_{\partial}
+\frac12\int_{\mathbb R/\mathbb Z}
\left\langle\operatorname{Ad}_{\Gamma_{z+\tau}^{-1}}\lambda_{z+\tau},
\operatorname{Ad}_{\Gamma_z^{-1}}\lambda_z\right\rangle\,dz\\
&\qquad+\frac12\int_{\mathbb R/\mathbb Z}
\left\langle\operatorname{Ad}_{\Gamma_{z+\tau}^{-1}}\lambda_{z+\tau},
\alpha_z\right\rangle\,dz\\
&\qquad+\frac12\int_{\mathbb R/\mathbb Z}
\left\langle\alpha_{z+\tau},
\operatorname{Ad}_{\Gamma_z^{-1}}\lambda_z\right\rangle\,dz.
\end{align*}
The Maurer-Cartan forms of $Q_z$ satisfy
\[
Q_z^*\theta^L=\operatorname{Ad}_{\Gamma_z}
(\alpha_{z+\tau}-\alpha_z),
\qquad
Q_z^*\theta^R=\rho_{z+\tau}-\operatorname{Ad}_{Q_z}\rho_z.
\]
By invariance,
\[
\left\langle\operatorname{Ad}_{\Gamma_{z+\tau}^{-1}}\lambda_{z+\tau},
\operatorname{Ad}_{\Gamma_z^{-1}}\lambda_z\right\rangle
=\left\langle\operatorname{Ad}_{Q_z^{-1}}\lambda_{z+\tau},
\lambda_z\right\rangle.
\]
The two mixed terms can be rewritten as
\begin{align*}
\left\langle\operatorname{Ad}_{\Gamma_{z+\tau}^{-1}}\lambda_{z+\tau},
\alpha_z\right\rangle
&=\left\langle\lambda_{z+\tau},\operatorname{Ad}_{Q_z}\rho_z\right\rangle\\
&=\left\langle\lambda_{z+\tau},\rho_{z+\tau}-Q_z^*\theta^R\right\rangle
\end{align*}
and, using graded skew-symmetry,
\begin{align*}
\left\langle\alpha_{z+\tau},
\operatorname{Ad}_{\Gamma_z^{-1}}\lambda_z\right\rangle
&=-\left\langle\lambda_z,
\operatorname{Ad}_{\Gamma_z}\alpha_{z+\tau}\right\rangle\\
&=-\left\langle\lambda_z,Q_z^*\theta^L+\rho_z\right\rangle.
\end{align*}
Therefore
\begin{align}
\ell^*\varpi_{\partial}
&=\operatorname{pr}_{\mathcal P}^*\varpi_{\partial}
+\frac12\int_{\mathbb R/\mathbb Z}
\Bigl(\left\langle\operatorname{Ad}_{Q_z^{-1}}\lambda_{z+\tau},\lambda_z\right\rangle
-\left\langle\lambda_{z+\tau},Q_z^*\theta^R\right\rangle\notag\\
&\hspace{5.4cm}-\left\langle\lambda_z,Q_z^*\theta^L\right\rangle\Bigr)\,dz\notag\\
&\qquad+\frac12\int_{\mathbb R/\mathbb Z}
\left(\langle\lambda_{z+\tau},\rho_{z+\tau}\rangle
-\langle\lambda_z,\rho_z\rangle\right)\,dz.
\label{eq:boundary-action-correction}
\end{align}
The last integral cancels the correction in
\eqref{eq:bulk-action-correction}. To identify the remaining integral,
recall that
\begin{equation}\label{eq:omega2-recall}
\omega_2=-\frac12\int_{\mathbb R/\mathbb Z}
\Bigl(\left\langle\operatorname{Ad}_{g(z)^{-1}}\Lambda_{z+\tau},\Lambda_z\right\rangle
-\left\langle\Lambda_{z+\tau},\Theta_z^R\right\rangle
-\left\langle\Lambda_z,\Theta_z^L\right\rangle\Bigr)\,dz.
\end{equation}
Under $F(\psi,\delta)=(\psi,q(\delta))$, the forms $\Lambda_z$ pull
back to $\lambda_z$, and $\Theta_z^{L,R}$ pull back to
$Q_z^*\theta^{L,R}$. Thus the three-term integral, including its coefficient $\tfrac12$
in \eqref{eq:boundary-action-correction}, is $-F^*\omega_2$.
Combining the bulk and boundary contributions gives
\begin{align*}
\ell^*\varpi
&=\ell^*\varpi_{\mathrm{bulk}}+\ell^*\varpi_{\partial}\\
&=\operatorname{pr}_{\mathcal P}^*
(\varpi_{\mathrm{bulk}}+\varpi_{\partial})-F^*\omega_2\\
&=\operatorname{pr}_{\mathcal P}^*\varpi-F^*\omega_2,
\end{align*}
which proves \eqref{eq:left-loop-action}.

The map
\[
(\psi,\delta)\longmapsto(\psi\delta,(\psi,q(\delta)))
\]
identifies $\Lhol G\times\mathcal P$ with
$\mathcal P\times_{q,t}\mathcal G_2$. Under this identification,
$a_R$ is the projection to $\delta$, while the $\mathcal P$-projection
is $\ell$. Rearranging \eqref{eq:left-loop-action} gives
\[
a_R^*\varpi=\operatorname{pr}_{\mathcal P}^*\varpi+
\operatorname{pr}_{\mathcal G_2}^*\omega_2.
\]

For descent of the left-action identity, use the surjective submersion
\[
(p_1,p_2,\psi)\longmapsto([p_1,p_2],\psi p_2),
\qquad q(p_1)=q(p_2),
\]
onto the left-action domain. The output is $\psi p_1$.
Apply \eqref{eq:left-loop-action} to $p_1$ and $p_2$ and subtract.
Since their $q$-values agree, the two $\omega_2$ corrections cancel.
The remaining difference is
$\operatorname{pr}_1^*\varpi-\operatorname{pr}_2^*\varpi$, the pullback
of $\omega_1$. This proves the left-action identity on the full domain.
\end{proof}

\section{Holomorphic Morita equivalence}
A principal holomorphic bibundle $(P,p,q)$ between groupoids
$(\mathcal H_i,\omega_i)$ over $(M_i,H_i)$ carries compatible forms if
\[
d\beta=q^*H_2-p^*H_1,
\]
the left and right actions satisfy the identities in
Proposition~\ref{prop:action-identities}, and
\[
\ker\beta\cap\ker Tp\cap\ker Tq=\{0\}.
\]
Here $d\omega_i=s_i^*H_i-t_i^*H_i$, with nondegeneracy understood as
in Section~\ref{sec:conventions}.

\begin{Theorem}\label{thm:morita}
The
space $(\mathcal P,\varpi)$ is a holomorphic Morita bibundle with
compatible forms. It relates the groupoids
$(\mathcal G_1,\omega_1)$ and $(\mathcal G_2,\omega_2)$, over
$(\mathcal A,0)$ and $(\Lhol G,\eta)$, respectively:
\[
\begin{tikzcd}
{(\mathcal G_1,\omega_1)} \arrow[d,shift right] \arrow[d,shift left]
& {(\mathcal P,\varpi)} \arrow[ld,"p"'] \arrow[rd,"q"]
& {(\mathcal G_2,\omega_2)} \arrow[d,shift right] \arrow[d,shift left] \\
\mathcal A && \Lhol G.
\end{tikzcd}
\]
\end{Theorem}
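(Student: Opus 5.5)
The proof will check the bibundle axioms one by one, with $\beta=\varpi$, $H_1=0$ and $H_2=\eta$, using the results already established.

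\emph{Principality.} Lemma~\ref{lem:p-q-holomorphic-submersions} shows that $p$ and $q$ are holomorphic surjective submersions. Lemma~\ref{lem:q} shows that the $q$-fibers are free $G_\Sigma$-orbits. Under the identification $\mathcal G_1\cong G_\Sigma\ltimes\mathcal A$, this means the left $\mathcal G_1$-action, defined via the pair-groupoid description, is free and transitive on $q$-fibers. Concretely, $[(p_1,p_2)]\cdot p_2=p_1$ reaches every point of the $q$-fiber, and a stabilizer element $[(p_1,p_2)]$ with $\psi p_1=\psi p_2$ forces $p_1=p_2$, i.e.\ a unit. Lemma~\ref{lem:p} shows that the $p$-fibers are free $\Lhol G$-orbits. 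For the right action $\gamma\cdot(\psi,g)=\psi^{-1}\gamma$, the arrow is determined by $\psi$ once $g=q(\psi^{-1}\gamma)$ is fixed. Hence the right $\mathcal G_2$-action is free and transitive on $p$-fibers. It remains to check that the two actions commute and that each moment map is invariant under the other action. Left multiplication by holomorphic loops commutes with the pair-groupoid action, since the latter was defined precisely by transporting with the aligning loop $\psi$. The map $p$ is $\Lhol G$-invariant, and $q$ is $\mathcal G_1$-invariant because $q(p_1)=q(p_2)$ and $q(\psi p_i)$ depends only on $q(p_i)$ by Lemma~\ref{lem:q}. Together these give the principal bibundle structure, with local holomorphic trivializations supplied by the standing assumptions.

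\emph{Compatible forms.} The identity $d\varpi=q^*\eta-p^*0$ is Proposition~\ref{prop:varpi-identities}(i). The two action identities are exactly Proposition~\ref{prop:action-identities}. The conventions $d\omega_1=0=s^*0-t^*0$ and $d\omega_2=s^*\eta-t^*\eta$ come from Section~\ref{sec:G1} and Lemma~\ref{lem:G2-quasi-symplectic}(ii). Weak nondegeneracy of $\omega_1$ and minimal degeneracy of $\omega_2$ come from Proposition~\ref{prop:G1-symp} and Lemma~\ref{lem:G2-quasi-symplectic}(iii)--(iv).

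\emph{Nondegeneracy condition.} I must show $\ker\varpi\cap\ker Tp\cap\ker Tq=\{0\}$. This follows immediately from Lemma~\ref{lem:kernel}, which already gives $\ker\varpi\cap\ker Tq=0$. For extra robustness, one can also check it directly. A vector in $\ker Tp\cap\ker Tq$ is simultaneously of the form $X\gamma$ with $X\in\Lhol\mathfrak g$ and of the form $\gamma\xi$ with $\xi\in\mathfrak g_\Sigma$. Then Proposition~\ref{prop:varpi-identities}(iii) and the surjectivity of $Tp$ force $\xi=0$.

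The main obstacle is the principality step. The delicate points there are the well-definedness of the $\mathcal G_1$-action on the quotient $(\mathcal P\times_q\mathcal P)/\Lhol G$, and the holomorphic dependence of the aligning loop $\psi$. I would handle both via the local holomorphic principal trivializations of $p$ invoked in Lemma~\ref{lem:p-q-holomorphic-submersions}, exactly as in the multiplicativity lemma for $\omega_1$. The compatibility of the forms is then purely a matter of citing the earlier results.
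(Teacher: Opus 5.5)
Your proposal is correct and follows essentially the same route as the paper. Principality comes from Lemmas~\ref{lem:q} and~\ref{lem:p} (free and transitive actions on the $q$- and $p$-fibers, with commuting actions), the form identities from Propositions~\ref{prop:varpi-identities} and~\ref{prop:action-identities}, and minimal degeneracy from Lemma~\ref{lem:kernel}. Your explicit check that $p$ is $\Lhol G$-invariant and $q$ is $\mathcal G_1$-invariant is a small addition that the paper leaves implicit.
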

\begin{proof}
The left $\mathcal G_1$-action and right $\mathcal G_2$-action commute.
Indeed, if a quotient arrow is represented by $(p_1,p_2)$ and a loop
$\psi$ acts through the right-action formula, the corresponding pair
is $(\psi^{-1}p_1,\psi^{-1}p_2)$, which represents the same quotient
arrow. All structure maps and actions are holomorphic under the
standing analytic assumptions.

For principality, suppose
$p(\gamma)=p(\delta)$, there is a unique $\psi\in\Lhol G$ such that
$\gamma=\psi\delta$. The unique arrow taking $\gamma$ to $\delta$
under the right action is
\[
(\psi,q(\delta)),
\]
whose target is $q(\gamma)$ and whose source is $q(\delta)$.
Thus the right $\mathcal G_2$-action is free and transitive on the
fibers of $p$. Its division map is holomorphic by the assumed local
principal trivializations.

Similarly, if $q(\gamma)=q(\delta)$, the unique quotient arrow taking
$\gamma$ to $\delta$ under the left action is
\[
[\delta,\gamma].
\]
The associated division map is holomorphic, and the left action is
principal with quotient $q$. Both $p$ and $q$ are holomorphic
surjective submersions.

By Proposition~\ref{prop:action-identities},
\[
a_L^*\varpi=\operatorname{pr}_{\mathcal G_1}^*\omega_1+
\operatorname{pr}_{\mathcal P}^*\varpi,
\qquad
 a_R^*\varpi=\operatorname{pr}_{\mathcal P}^*\varpi+
\operatorname{pr}_{\mathcal G_2}^*\omega_2.
\]
The background-form condition follows from
Proposition~\ref{prop:varpi-identities}:
\[
d\varpi=q^*\eta=q^*\eta-p^*0.
\]
The minimal-degeneracy condition is also satisfied, since
\[
\ker\varpi\cap\ker Tp\cap\ker Tq
\subseteq\ker\varpi\cap\ker Tq=\{0\}
\]
by Lemma~\ref{lem:kernel}.

Proposition~\ref{prop:G1-symp} and
Lemma~\ref{lem:G2-quasi-symplectic} establish the required groupoid
structures, completing the proof.
\end{proof}

\appendix
\section{Finite-dimensional Cartan-Dirac identity}\label{app:cartan-dirac}
The finite-dimensional theory of multiplicative forms and induced
Dirac structures is developed in
\cite[Section~4]{bursztyn2004integration}. We record the correction
terms for the two-sided action used in Section~\ref{sec:G2}.

On $G\times G\times G$, write
\[
\phi_1=a^{-1}da,\qquad\phi_2=b^{-1}db,
\qquad\Theta^L=h^{-1}dh,\qquad\Theta^R=dh\,h^{-1},
\]
and set
\[
\Omega=-\frac12\left(
\langle\operatorname{Ad}_{h^{-1}}\phi_2,\phi_1\rangle
-\langle\phi_2,\Theta^R\rangle
-\langle\phi_1,\Theta^L\rangle\right).
\]
For $t(a,b,h)=bha^{-1}$ and $s(a,b,h)=h$, we claim
\begin{equation}\label{eq:appendix-relative}
d\Omega=s^*\eta_G-t^*\eta_G+\operatorname{pr}_b^*\eta_G
-\operatorname{pr}_a^*\eta_G.
\end{equation}
Indeed, the multiplication and division identities for the Cartan
form give
\begin{align*}
(bh)^*\eta_G
&=b^*\eta_G+h^*\eta_G
-\frac12d\langle b^*\theta^L,h^*\theta^R\rangle,\\
(bha^{-1})^*\eta_G
&=(bh)^*\eta_G-a^*\eta_G
+\frac12d\langle(bh)^*\theta^L,a^*\theta^L\rangle.
\end{align*}
Since
\[
(bh)^*\theta^L=\operatorname{Ad}_{h^{-1}}\phi_2+\Theta^L,
\]
substitution yields
\begin{align*}
t^*\eta_G
&=b^*\eta_G+h^*\eta_G-a^*\eta_G\\
&\qquad+\frac12d\left(
-\langle\phi_2,\Theta^R\rangle
+\langle\operatorname{Ad}_{h^{-1}}\phi_2,\phi_1\rangle
+\langle\Theta^L,\phi_1\rangle\right)\\
&=b^*\eta_G+h^*\eta_G-a^*\eta_G-d\Omega.
\end{align*}
This proves \eqref{eq:appendix-relative}.

For multiplicativity, a composable pair of two-sided arrows is
\[
(a_1,b_1,b_2ha_2^{-1}),\qquad(a_2,b_2,h),
\]
with product $(a_1a_2,b_1b_2,h)$. Put
\[
u_i=a_i^*\theta^L,\qquad v_i=b_i^*\theta^L,
\qquad \widehat u_i=a_i^*\theta^R,\qquad
\widehat v_i=b_i^*\theta^R.
\]
The Maurer-Cartan multiplication formulas are
\[
(a_1a_2)^*\theta^L=\operatorname{Ad}_{a_2^{-1}}u_1+u_2,
\qquad
(b_1b_2)^*\theta^L=\operatorname{Ad}_{b_2^{-1}}v_1+v_2.
\]
For $h'=b_2ha_2^{-1}$, the base forms are
\begin{align*}
(h')^*\theta^L
&=\operatorname{Ad}_{a_2}
\bigl(\operatorname{Ad}_{h^{-1}}v_2+\Theta^L-u_2\bigr),\\
(h')^*\theta^R
&=\operatorname{Ad}_{b_2}
\bigl(v_2+\Theta^R-\operatorname{Ad}_h u_2\bigr).
\end{align*}
Substituting these expressions into the three terms of $\Omega$, and
using invariance of the pairing to cancel the terms containing
$\Theta^{L,R}$ and the mixed terms containing $\operatorname{Ad}_h$,
gives
\begin{equation}\label{eq:appendix-defect}
m^*\Omega-\operatorname{pr}_1^*\Omega-\operatorname{pr}_2^*\Omega
=-\frac12\left(\langle v_1,\widehat v_2\rangle
-\langle u_1,\widehat u_2\rangle\right).
\end{equation}
These extra terms generally do not vanish pointwise.

Under $a_i=\psi_i(z)$ and $b_i=\psi_i(z+\tau)$, the two terms on the
right of \eqref{eq:appendix-defect} are translates in $z$ and have
identical contour integrals. The same is true of the last two terms
in \eqref{eq:appendix-relative}. Integration commutes with $d$ and
pullback, so the resulting loop-group form satisfies
\[
d\omega_2=s^*\eta-t^*\eta,
\qquad m^*\omega_2=\operatorname{pr}_1^*\omega_2+
\operatorname{pr}_2^*\omega_2,
\]
as used in Lemma~\ref{lem:G2-quasi-symplectic}.

\nocite{*}
\bibliographystyle{plain}
\bibliography{Sources}
\end{document}